\newcommand{\stkout}[1]{\ifmmode\text{\sout{\ensuremath{#1}}}\else\sout{#1}\fi}
\newtheorem{theorem}{Theorem}[section]
\theoremstyle{definition}
\theoremstyle{remark}
\newtheorem{remark}{Remark}[section]
\numberwithin{theorem}{section}
\numberwithin{equation}{section}
\crefname{section}{Section}{Sections}
\crefname{subsection}{Section}{Sections}
\crefname{condition}{Condition}{Conditions}
\crefname{hypothesis}{Hypothesis}{Conditions}
\crefname{assumption}{Assumption}{Assumptions}
\crefname{lemma}{Lemma}{Lemmas}
\crefname{fact}{Fact}{Facts}
\Crefname{figure}{Figure}{Figures}
\newcommand{\vertiii}[1]{{\left\vert\kern-0.25ex\left\vert\kern-0.25ex\left\vert #1 
    \right\vert\kern-0.25ex\right\vert\kern-0.25ex\right\vert}}
\newcommand{\Uadm}{\mathfrak U}
\newcommand{\Act}{\mathbb{U}}
\newcommand{\Usm}{\mathfrak U_{\mathsf{sm}}}
\newcommand{\Udsm}{\mathfrak U_{\mathsf{dsm}}}
\newcommand{\Um}{\mathfrak U_{\mathsf{m}}}
\newcommand{\pV}{\mathrm{V}} % Probability measures space
\newcommand{\pv}{\mathrm{v}} % Probability measures
\newcommand{\fB}{{\mathfrak{B}}}  % Borel Sets
\newcommand{\cB}{{\mathcal{B}}}  % Borel Maps
\newcommand{\sB}{{\mathscr{B}}}  % Ball
\newcommand{\cC}{{\mathcal{C}}}   % Continuous functions
\newcommand{\sD}{{\mathscr{D}}}   % Skorokhod space 
\newcommand{\sE}{{\mathscr{E}}} 
\newcommand{\sF}{{\mathfrak{F}}}   % sigma field
\newcommand{\cJ}{{\mathcal{J}}}  % discounted cost
\newcommand{\sK}{{\mathscr{K}}}  % not used
\newcommand{\sL}{{\mathscr{L}}}  %  Generator of Diffusion
\newcommand{\Lp}{{L}}            % Lp
\newcommand{\cM}{{\mathcal{M}}} % policies relaxed
\newcommand{\fM}{{\mathfrak{M}}} % for policies relaxed
\newcommand{\cT}{{\mathcal{T}}}
\newcommand{\cP}{{\mathcal{P}}}  % Probability measures
\newcommand{\Lyap}{{\mathcal{V}}}  % Lyapunov
\newcommand{\Zplus}{\mathds{Z}_+}
\newcommand{\RR}{\mathds{R}}
\newcommand{\NN}{\mathds{N}}
\newcommand{\Rd}{{\mathds{R}^{d}}}
\DeclareMathOperator{\Exp}{\mathbb{E}}
\newcommand{\D}{\mathrm{d}}
\newcommand{\cD}{\mathcal{D}} %domain
\newcommand{\Sob}{{\mathscr W}}    % Sobolev Space
\newcommand{\Sobl}{{\mathscr W}_{\text{loc}}} % Sobolev Space(local)
\newcommand{\df}{:=}
\newcommand{\transp}{^{\mathsf{T}}}
\DeclareMathOperator*{\trace}{Tr}
\newcommand{\order}{{\mathscr{O}}}
\newcommand{\sorder}{{\mathfrak{o}}}
\newcommand{\grad}{\nabla}
\newcommand{\uuptau}{{\Breve\uptau}}
\newcommand{\abs}[1]{\lvert#1\rvert}
\newcommand{\norm}[1]{\lVert#1\rVert}
\definecolor{dmagenta}{rgb}{.4,.1,.5}
\definecolor{dblue}{rgb}{.0,.0,.5}
\definecolor{mblue}{rgb}{.0,.0,.7}
\definecolor{ddblue}{rgb}{.0,.0,.4}
\definecolor{dred}{rgb}{.7,.0,.0}
\definecolor{dgreen}{rgb}{.0,.5,.0}
\definecolor{Eeom}{rgb}{.0,.0,.5}
\begin{document}
\title[Robustness of Markov Chain Approximations for Controlled Diffusions]
{Discrete-Time Approximations of Controlled Diffusions with Infinite Horizon Discounted and Average Cost}

\author[Somnath Pradhan]{Somnath Pradhan$^\dag$}
\address{$^\dag$Department of Mathematics, Indian Institute of Science Education and Research Bhopal, MP-462066, India}
\email{somnath@iiserb.ac.in}

\author[Serdar Y\"{u}ksel]{Serdar Y\"{u}ksel$^{\ddag}$}
\address{$^\ddag$Department of Mathematics and Statistics,
Queen's University, Kingston, ON, Canada}
\email{yuksel@queensu.ca}

%%%%%%%%%%%%%%%%%%%%%%%%%%%%%%%%%%%%%%%%%%%%%%%%%%%%%%%%%%%%%%%%%%%%%%%%%%%%%%%%
\begin{abstract}
We present discrete-time approximation of optimal control policies for infinite horizon discounted/ergodic control problems for controlled diffusions in $\Rd$\,. In particular, our objective is to show near optimality of optimal policies designed from the approximating discrete-time controlled Markov chain model, for the discounted/ergodic optimal control problems, in the true controlled diffusion model (as the sampling period approaches zero). To this end, we first construct suitable discrete-time controlled Markov chain models for which one can compute optimal policies and optimal values via several methods (such as value iteration, convex analytic method, reinforcement learning etc.). Then using a weak convergence technique, we show that the optimal policy designed for the discrete-time Markov chain model is near-optimal for the controlled diffusion model as the discrete-time model approaches the continuous-time model. This provides a practical approach for finding near-optimal control policies for controlled diffusions. Our conditions complement existing results in the literature, which have been arrived at via either probabilistic or PDE based methods.
\end{abstract}
%%%%%%%%%%%%%%%%%%%%%%%%%%%%%%%%%%%%%%%%%%%%%%%%%%%%%%%%%%%%
\keywords{Controlled Diffusions, Discrete-time approximations, Discounted cost, Average cost}

%\subjclass[2000]{}

%%%%%%%%%%%%%%%%%%%%%%%%%%%%%%%%%%%%%%%%%%%%%%%%%%%%%%%%%%%%%%%%%%%%%%%%%%%%%%%%
\maketitle

%%%%%%%%%%%%%%%%%%%%%%%%%%%%%%%%%%%%%%%%%%%%%%%%%%%%%%%%%%%%%%%%%%%%%%%%%%%%%%%%

\section{Introduction}
In this paper, we study discrete-time approximations of optimal control policies for controlled diffusion models under infinite horizon discounted/ergodic cost criteria\,. 

\subsection{Literature Review}
In the study of controlled difussions, researchers have extensively studied the problems of existence, uniqueness and verification of optimality of stationary Markov policies see e.g., \cite{BS86}, \cite{BB96} (discounted cost)  \cite{AA12}, \cite{AA13}, \cite{BG88I}, \cite{BG90b} (ergodic cost) and references therein. For a book-length exposition of this topic see e.g., \cite{ABG-book}. 

The analytical properties of solution to the Hamilton-Jacobi-Bellman (HJB) equation are important for understanding the behavior of optimal-value/ optimal control policies. However, for many important classes of problems, little is known on the analytical properties of solutions of the associated HJB equations. In view of this, in order to compute the value-function/ optimal control policies of such problems, many numerical schemes have been constructed, see e.g., \cite{BR-02}, \cite{BJ-06}, \cite{JPR-19P}, \cite{kushner1990numerical,kushner2001numerical, kushner2012weak}.

For discounted cost and ergodic cost (for reflected diffusion processes in a smooth bounded domain), there is an extensive literature, and we refer the reader to \cite{kushner1990numerical,kushner2001numerical, kushner2012weak,fleming2006controlled} for a detailed analysis and review. In our paper, in addition to a review, we also relax some of the technical regularity conditions studied in these papers. In particular for the ergodic cost case, we establish discrete-time approximation results for a general class of controlled diffusions in $\Rd$\,.

In the continuous-time literature, most of the approximation results focus on finite horizon or discounted cost criteria see, e.g., \cite{BR-02}, \cite{BJ-06}, \cite{KD92}, \cite{KH77}, \cite{KH01}, \cite{KN98A}, \cite{KN2000A}\,, though the ergodic control and control up to an exit time criteria have also been studied \cite{KD92,kushner2014partial}. For finite horizon criteria, a commonly adopted approach of approximating controlled diffusions by a sequence of discrete time Markov chains via weak convergence methods was studied by Kushner and Kushner and Dupuis, see \cite{KD92}, \cite{KH77}, \cite{KH01}\,. These works deal with numerical procedures to construct near optimal  control policies for controlled diffusion models by approximating the space of (open-loop adapted) relaxed control policies with those that are piece-wise constant, and by considering the weak convergence of approximating probability measures on the path space to the measure on the continuous-time limit. It is shown in \cite{KD92}, \cite{KH77}, \cite{KH01} that if the constructed controlled Markov chain satisfies a certain ``consistency" condition at the discrete-time sampling instants, then the state process and the corresponding value function asymptotically approximates the continuous time state process and the associated value function. This approach has been referred to as the {\it weak convergence} approach.

Making use of the above weak convergence approach and Girsanov's change of measure technique, when the time horizon is finite, the authors in \cite{pradhanyuksel2023DTApprx} established the existence of an optimal policy and discrete-time approximation for controlled diffusion under a variety of information structures through a unified perspective. These include fully observed models, partially observed models and multi-agent models with decentralized information structures.

In an alternative program, building on finite difference approximations for HJB equations utilizing their regularity properties, Krylov \cite{KN98A}, \cite{KN2000A} established the convergence rate of for such approximation techniques, where finite difference approximations are studied to arrive at stability results. In particular, some estimates for the error bound of finite-difference approximation schemes in the problem of finding viscosity or probabilistic solutions to degenerate Bellman equations are established. The proof technique is based on mean value theorems for stochastic integrals (as in \cite{KN2001AA}), obtained on the basis of elementary properties of the associated Bellman's equations\,. Also, for controlled non-degenerate diffusion processes, it is shown in \cite{KN99AA} that using policies which are constant on intervals of length $h^2$, one can approximate the value function with errors of order $h^{\frac{1}{3}}$\,. In \cite{BR-02}, \cite{BJ-06} Barles et. al. improved the error bounds obtained in \cite{KN98A}, \cite{KN2000A}, \cite{KN99AA}; along this note a recent further analysis is presented in \cite{jakobsen2019improved} where both stochastic analysis and PDE methods are utilized.

Such numerical methods have long been a standard approach for tackling the HJB equations. These methods primarily focus on discretizing the non-linear HJB equations using techniques such as finite difference and finite element methods. The basic idea is to partition the state space into a grid and approximate the derivatives in the PDEs using finite differences or to represent the solution in terms of basis functions for finite elements.

However, as noted in \cite{KBHIFAC23}, these numerical methods face inherent challenges, particularly as the dimensionality of the state space increases. As the number of dimensions grows, the computational burden becomes increasingly unmanageable. For instance, extending methods beyond three dimensions often requires sophisticated numerical techniques.

In contrast, probabilistic methods have emerged as an alternative to the numerical PDE methods. By leveraging the Markov chain approximation, these methods offer a more intuitive and flexible framework for exploring the value function in high-dimensional spaces. Additionally, probabilistic approaches offer a variety of techniques to approximate the optimal value and optimal policy of the discrete-time model including Reinforcement Learning (RL) and Empirical Dynamic Programming (EDP). Reinforcement Learning is a model-free approach that allows agents to learn optimal strategies through interaction with the environment; RL algorithms such as Q-learning and policy gradient methods have gained popularity for their ability to approximate the value function directly from observed data, making them particularly suitable for complex, high-dimensional problems. Empirical Dynamic Programming (EDP) is an extension of traditional dynamic programming that estimates the value function using sampled trajectories from the system.

\subsection{Contributions}
 
In this article, we propose a probabilistic framework for discrete-time approximations of optimal control policies in controlled diffusion models, specifically addressing both infinite horizon discounted and ergodic cost criteria. Our approach deviates from the traditional PDE-based methods prevalent in the literature, offering complementary insights and conditions that enhance the understanding of optimal control in these settings. Unlike methods based on PDEs, our approach incorporates a probabilistic structure that can yield solutions under different sets of assumptions and regularity conditions. This distinction is crucial as it allows for the treatment of problems that may not fit neatly into the PDE framework. In particular, in order to construct a suitable approximating discrete-time Markov chain via finite deference approximation approach, the authors in \cite[Section~5.3.1, p. 108]{KD92} assumed that for all $i=1,\dots , d$, $$a_{i,i} - \sum_{j, j\neq i}\abs{a_{i,j}} \geq 0,$$ where $a(x)= \frac{1}{2}\upsigma(x)\upsigma(x)^{\transp}$\,. This condition may not be satisfied by the diffusion matrix $\upsigma$ in general\,. Furthermore, our approach allows for less stringent regularity conditions, for example our cost function does not require a global Lipschitz condition \cite{jakobsen2019improved}. 

The approach for finite horizon problem is different from the infinite horizon criteria we study in this paper. A critical argument in our paper utilizes near optimality of Lipschitz policies for such problems presented in our recent work \cite{PYNearsmoothSCL24}, building on \cite{pradhan2022near}. 

We emphasize that the average cost setup has not been studied in the generality we present and our results on the discounted cost criteria complements the prior studies. 

Our analysis allows one to take advantage of the relatively more mature theory of discrete-time stochastic control, as there is an established theory on existence, approximation, and learning theory for optimal discrete-time stochastic control\,. Notably, recently Bayraktar and Kara \cite{bayraktar2022approximate} obtained near optimality results for time and space discretized Q-learning for controlled diffusions (building on quantized Q-learning for standard Borel MDPs \cite{KSYContQLearning}). 

%\sy{We should then compare our results with those of Krylov, Reisinger etc. and others...Our approach is probabilistic, not based on PDEs and hence has complementary conditions...to be explained in precise terms...}
%{\bf Contributions.}

As our primary contribution, we show that for a continuous-time model 
 \begin{align*}
 dX_t = b(X_t,U_t)dt + \upsigma(X_t)dW_t\,,
 \end{align*} an optimal policy designed for the time discretized model
  \begin{align*}
 X_{(k+1)h} =& X_{kh} + \int_{kh}^{(k+1)h} b(X_{s},U_{kh})ds  + \int_{kh}^{(k+1)h} \upsigma(X_{s}) dW_s
 \end{align*} is near optimal for discounted cost criteria (see Theorems \ref{TD1.2} and \ref{TD1.3}) as well as for average cost criteria (see Theorems \ref{TContValue1A} and \ref{TNearOpt1A}).
%%%%%%%%%%%%%%%%%%%%%%%%%%%%%
\subsection*{Notation:}
\begin{itemize}
\item For any set $A\subset\RR^{d}$, by $\uptau(A)$ we denote \emph{first exit time} of the process $\{X_{t}\}$ from the set $A\subset\RR^{d}$, defined by
\begin{equation*}
\uptau(A) \,\df\, \inf\,\{t>0\,\colon X_{t}\not\in A\}\,.
\end{equation*}
\item $\sB_{r}$ denotes the open ball of radius $r$ in $\RR^{d}$, centered at the origin,
\item $\uptau_{r}$, $\uuptau_{r}$ denote the first exist time from $\sB_{r}$, $\sB_{r}^c$ respectively, i.e., $\uptau_{r}\df \uptau(\sB_{r})$, and $\uuptau_{r}\df \uptau(\sB^{c}_{r})$.
\item By $\trace S$ we denote the trace of a square matrix $S$.
\item For any domain $\cD\subset\RR^{d}$, the space $\cC^{k}(\cD)$ ($\cC^{\infty}(\cD)$), $k\ge 0$, denotes the class of all real-valued functions on $\cD$ whose partial derivatives up to and including order $k$ (of any order) exist and are continuous.
\item $\cC_{\mathrm{c}}^k(\cD)$ denotes the subset of $\cC^{k}(\cD)$, $0\le k\le \infty$, consisting of functions that have compact support. This denotes the space of test functions.
\item $\cC_{b}(\Rd)$ denotes the class of bounded continuous functions on $\Rd$\,.
\item $\cC^{k}_{0}(\cD)$, denotes the subspace of $\cC^{k}(\cD)$, $0\le k < \infty$, consisting of functions that vanish in $\cD^c$.
\item $\Lp^{p}(\cD)$, $p\in[1,\infty)$, denotes the Banach space
of (equivalence classes of) measurable functions $f$ satisfying
$\int_{\cD} \abs{f(x)}^{p}\,\D{x}<\infty$.
\item $\Sob^{k,p}(\cD)$, $k\ge0$, $p\ge1$ denotes the standard Sobolev space of functions on $\cD$ whose generalized derivatives up to order $k$ are in $\Lp^{p}(\cD)$, equipped with its natural norm (see, \cite{Adams})\,.
\item  If $\mathcal{X}(Q)$ is a space of real-valued functions on $Q$, $\mathcal{X}_{\mathrm{loc}}(Q)$ consists of all functions $f$ such that $f\varphi\in\mathcal{X}(Q)$ for every $\varphi\in\cC_{\mathrm{c}}^{\infty}(Q)$. In a similar fashion, we define $\Sobl^{k, p}(\cD)$.
\end{itemize}
%%%%%%%%%%%%%%%%%%%%%%%%%%%%%%%%%%%%%%%%%%%%%%%%%%%%%%%%%%%%%%%%%%%%%%%%%%%%%%%%

\section{Description of the problem}\label{PD} Let $\Act$ be a convex compact subset of a Euclidean space $\RR^N$ and $\pV=\cP(\Act)$ be the space of probability measures on  $\Act$ with topology of weak convergence. Let $$b : \Rd \times \Act \to  \Rd, $$ $$ \sigma : \Rd \to \RR^{d \times d},\, \sigma = [\sigma_{ij}(\cdot)]_{1\leq i,j\leq d},$$ be given functions. We consider a stochastic optimal control problem whose state is evolving according to a controlled diffusion process given by the solution of the following stochastic differential equation (SDE)
\begin{equation}\label{E1.1}
\D X_t \,=\, b(X_t,U_t) \D t + \upsigma(X_t) \D W_t\,,
\quad X_0=x\in\Rd.
\end{equation}
Where 
\begin{itemize}
\item
$W$ is a $d$-dimensional standard Wiener process, defined on a complete probability space $(\Omega, \sF, \mathbb{P})$.
\item 
 We extend the drift term $b : \Rd \times \pV \to  \Rd$ as follows:
\begin{equation*}
b (x,\mathrm{v}) = \int_{\Act}b (x,\zeta)\mathrm{v}(\D \zeta), 
\end{equation*}
for $\mathrm{v}\in\pV$.
\item
$U$ is a $\pV$ valued process satisfying the following non-anticipativity condition: for $s<t\,,$ $W_t - W_s$ is independent of
\begin{align*}
\sF_s := &\,\,\mbox{the completion of}\,\,\, \sigma(X_0, U_r, W_r : r\leq s)\nonumber\\
&\,\,\,\mbox{relative to} \,\, (\sF, \mathbb{P})\,.
\end{align*}
\end{itemize}
The process $U$ is called an \emph{admissible} control, and the set of all admissible controls is denoted by $\Uadm$ (see, \cite{BG90}).

To ensure existence and uniqueness of solutions of \cref{E1.1}, we impose the following assumptions on the drift $b$ and the diffusion matrix $\upsigma$\,. 
\begin{itemize}
%\item[(A1)]
\item[\hypertarget{A1}{{(A1)}}]
\emph{Lipschitz continuity:\/}
The function
$\upsigma\,=\,\bigl[\upsigma^{ij}\bigr]\colon\RR^{d}\to\RR^{d\times d}$,
$b\colon\Rd\times\Act\to\Rd$ are uniformly bounded and Lipschitz continuous in $x$, $(x, \zeta)$ respectively. In other words, for some constant $C_{0}>0$, we have
\begin{align*}
\abs{b(x,\zeta_1) - b(y, \zeta_2)}^2 &+ \norm{\upsigma(x) - \upsigma(y)}^2 \nonumber\\
&\,\le\, C_{0}\,\left(\abs{x-y}^2 + \abs{\zeta_1- \zeta_2}^2\right)\,,
\end{align*}
for all $x,y\in \Rd$ and $\zeta_1, \zeta_2\in\Act$, where $\norm{\upsigma}\df\sqrt{\trace(\upsigma\upsigma\transp)}$\,.
\medskip
\item[\hypertarget{A2}{{(A2)}}]
\emph{Nondegeneracy:\/}
For each $R>0$, there exists positive constant $C_{R}$ (depending on $R$) such that
\begin{equation*}
\sum_{i,j=1}^{d} a^{ij}(x)z_{i}z_{j}
\,\ge\,C^{-1}_{R} \abs{z}^{2} \qquad\forall\, x\in \sB_{R}\,,
\end{equation*}
and for all $z=(z_{1},\dotsc,z_{d})\transp\in\RR^{d}$,
where $a\df \frac{1}{2}\upsigma \upsigma\transp$.
\end{itemize}
% Since $\abs{\abs{b(x,\zeta)} - \abs{b(0, \zeta)}} \leq \abs{b(x,\zeta) - b(y, \zeta)}$\,. This implies $$\abs{b(x,\zeta)}^2 \leq 2\left(\abs{b(x,\zeta) - b(y, \zeta)}^2 + \abs{b(0,\zeta)}^2\right)\,.$$ Thus, as a consequence of Assumption~\hyperlink{A1}{{(A1)}}, we deduce that $b$ and $\upsigma$ satisfy a global growth condition of the form
% \begin{equation}\label{EGGrowth}
% \sup_{\zeta\in\Act}\,  \abs{b(x, \zeta)}^{2} + \norm{\upsigma(x)}^{2} \,\le\,\hat{C}_0 \bigl(1 + \abs{x}^{2}\bigr) \qquad \forall\, x\in\RR^{d},
% \end{equation}
% for some constant $\hat{C}_0>0$ (which depends on $C_{0}$)\,.

By a Markov control we mean an admissible control of the form $U_t = v(t,X_t)$ for some Borel measurable function $v:\RR_+\times\Rd\to\pV$. The space of all Markov controls is denoted by $\Um$\,.
If the function $v$ is independent of $t$, then $v$ is called a stationary Markov control. The set of all stationary Markov controls is denoted by $\Usm$. A policy $v\in \Usm$ is said to be a deterministic stationary Markov policy if $v(x) = \delta_{f(x)}$ for some measurable map $f:\Rd\to \Act$. Let $\Udsm$ be space of all deterministic stationary Markov policies\,. From \cite[Section~2.4]{ABG-book}, we have that the set $\Usm$ is metrizable under the following Borkar topology, which defines a compact metric under which a sequence $v_n\to v$ in $\Usm$ if and only if
\begin{align*}
&\lim_{n\to\infty}\int_{\Rd}f(x)\int_{\Act}g(x,\zeta)v_{n}(x)(\D \zeta)\D x \nonumber\\
& = \int_{\Rd}f(x)\int_{\Act}g(x,\zeta)v(x)(\D \zeta)\D x
\end{align*}
for all $f\in L^1(\Rd)\cap L^2(\Rd)$ and $g\in \cC_b(\Rd\times \Act)$\,. It is well known that under the hypotheses \hyperlink{A1}{{(A1)}}--\hyperlink{A2}{{(A2)}}, for any admissible control \cref{E1.1} has a unique strong solution \cite[Theorem~2.2.4]{ABG-book}, and under any stationary Markov strategy \cref{E1.1} has a unique strong solution which is a strong Feller (therefore strong Markov) process \cite[Theorem~2.2.12]{ABG-book}.

Let $c\colon\Rd\times\Act \to \RR_+$ be the \emph{running cost} function. We assume that $c$ is bounded, jointly continuous in $(x, \zeta)$ and locally Lipschitz continuous in its first argument uniformly with respect to $\zeta\in\Act$. We extend $c\colon\Rd\times\pV \to\RR_+$ as follows: for $\pv \in \pV$
\begin{equation*}
c(x,\pv) := \int_{\Act}c(x,\zeta)\pv(\D\zeta)\,.
\end{equation*}
\textbf{Discounted Cost Criterion:} For $U \in\Uadm$, the associated \emph{$\alpha$-discounted cost} is given by
\begin{equation}\label{EDiscost}
\cJ_{\alpha}^{U}(x, c) \,\df\, \Exp_x^{U} \left[\int_0^{\infty} e^{-\alpha s} c(X_s, U_s) \D s\right],\quad x\in\Rd\,,
\end{equation} where $\alpha > 0$ is the discount factor
and $X_{\cdot}$ is the solution of the \cref{E1.1} corresponding to $U\in\Uadm$ and $\Exp_x^{U}$ is the expectation with respect to the law of the process $X_{\cdot}$ with initial condition $x$. Here the controller tries to minimize \cref{EDiscost} over the set of admissible controls $U\in \Uadm$\,.
A control $U^{*}\in \Uadm$ is said to be an optimal control if for all $x\in \Rd$ 
\begin{equation}\label{OPDcost}
\cJ_{\alpha}^{U^*}(x, c) = \inf_{U\in \Uadm}\cJ_{\alpha}^{U}(x, c) \,\,\, (\df \,\, V_{\alpha}(x))\,.
\end{equation}\\ \\
\textbf{Ergodic Cost Criterion:} For $U\in\Uadm$, the associated ergodic cost is defined as
\begin{equation}\label{ECost1}
\sE_{x}(c, U) = \limsup_{T\to \infty}\frac{1}{T}\Exp_x^{U}\left[\int_0^{T} c(X_s, U_s) \D{s}\right]\,.
\end{equation} and the optimal value is defined as
\begin{equation}\label{ECost1Opt}
\sE^*(c) \,\df\, \inf_{x\in\Rd}\inf_{U\in \Uadm}\sE_{x}(c, U)\,.
\end{equation}
Then a control $U^*\in \Uadm$ is said to be optimal if we have 
\begin{equation}\label{ECost1Opt1}
\sE_{x}(c, U^*) = \sE^*(c)\quad \text{for all} \,\,\, x\in \Rd\,.
\end{equation}
%%%%%%%%%%%%%%%%%%%%%%%%%%%%%%%%%%%%%
Associated to the controlled diffusion model \cref{E1.1}, we define a family of operators $\sL_{\zeta}$ mapping $\cC^2(\Rd)$ to $\cC(\Rd)$ by
\begin{equation}\label{E-cI}
\sL_{\zeta} f(x) \,\df\, \trace\bigl(a(x)\grad^2 f(x)\bigr) + \,b(x,\zeta)\cdot \grad f(x)\,, 
\end{equation}
for $\zeta\in\Act$, \,\, $f\in \cC^2(\Rd)\cap\cC_b(\Rd)$\,.
For $\pv \in\pV$ we extend $\sL_{\zeta}$ as follows:
\begin{equation}\label{EExI}
\sL_\pv f(x) \,\df\, \int_{\Act} \sL_{\zeta} f(x)\pv(\D \zeta)\,.
\end{equation} For $v \in\Usm$, we define
\begin{equation}\label{Efixstra}
\sL_{v} f(x) \,\df\, \trace(a\grad^2 f(x)) + b(x,v(x))\cdot\grad f(x)\,.
\end{equation}
\subsection*{Approximating Controlled Markov Chains:}
Let $\Rd$ be the state space, and $\Act$ be the control action space of the controlled Markov chain. 

We define the transition probabilities as follows: For any $k\in\Zplus$,  distribution of the state $X_k^h$ conditioned on the past state and action variables, is determined by the diffusion process (\ref{E1.1}), such that, conditioned on $(X_{k-1}^h,\dots,X_0^h,U^h_{k-1},\dots,U_0^h)$, $X_k^h$ has the same distribution as 
\begin{align}\label{sampled_MDP}
X_{kh} &= X_{k-1}^h +\int_{(k-1)h}^{kh}b(X_s,U^h(s))ds +\int_{(k-1)h}^{kh}\upsigma(X_s)dW_s
\end{align} 
where $U^h(s)=U_{k-1}^h$ for all $(k-1)h \leq s < kh$ (that is the control $U^h$ is piece-wise constant in time). Hence, for any $A\in\cB(\Rd)$
\begin{align*}
Pr(X_k^h\in A|X^h_{[0,k-1]},U^h_{[0,k-1]})=\mathcal{T}_h(A|X^h_{k-1},U^h_{k-1})
\end{align*}
where ${X^h}_{[0,k-1]},{U^h}_{[0,k-1]}:=X_0^h,\dots,X^h_{k-1},U^h_0,\dots,U^h_{k-1}$, such that 
\begin{align*}
X_k^h \sim \mathcal{T}_h(dx_k|X^h_{k-1},U^h_{k-1})
\end{align*}
where $X_k^h$ determined by (\ref{sampled_MDP}) and where  $\mathcal{T}_h$ is the transition kernel of the Markov chain which is a stochastic kernel from $\Rd\times
\Act$ to $\Rd$.

For the discrete-time model, an {\em admissible policy} is a sequence of control functions $\{U^h_k,\, k\in \Zplus\}$ such that $U^h_k$ is measurable with respect to the $\sigma$-algebra generated by the information variables
$
I_k^h=\{X_{[0,k]}^h,U_{[0,k-1]}^h\},\,\, k \in \NN,\,\, I_0^h=\{X_0^h\},
$ that is
\begin{equation}
\label{eq_control}
U_k^h=v_k^h(I_k^h),\quad k\in \Zplus,\nonumber
\end{equation}
where $v_k^h$ is a $\pV$-valued measurable function for $k\in \Zplus$\,. We define $\Uadm^h$ to be the set of all such admissible policies. Let $\Usm^h := \{U^h\in \Uadm^h: U_k^h = v^h(X_k^h)\,\,\text{for some measurable map}\,\, v^h:\Rd\to\pV\}$ be the set of all stationary Markov policies\,. A policy $v^h\in \Usm^h$ is said to be a deterministic stationary Markov policy if $v^h(x) = \delta_{f(x)}$ for some measurable map $f:\Rd\to \Act$. Let $\Udsm^h$ be the space of all deterministic stationary Markov policies\,.

We also define a stage-wise cost function $c_h:\Rd\times\Act\to \RR_+$ such that for any $(x,\zeta)\in\Rd\times\Act$
\begin{align*}
c_h(x,\zeta):=c(x,\zeta)\times h
\end{align*} where $c$ is the cost function of the diffusion model\,. We are interested in the following cost evaluation criteria\,.\\
%%%%%%%%%%%%%%%%%%
\textbf{Discrete-time Discounted Cost:} For each $U^h\in \Uadm^h$, the associated discounted cost of the approximating discrete-time model is given by
\begin{equation}\label{E2.1APM}
\cJ_{\alpha,h}^{U^h}(x, c) \,\df\, \Exp_x^{U^h} \left[\sum_{k=0}^{\infty} \beta^k c_h(X_k^h, U_k^h) | X_{0}^h = x \right]\,,
\end{equation} for $x\in\Rd$, where $\beta \df e^{-\alpha h}$\,. The optimal cost is defined as
\begin{equation}\label{E2.1APM1}
\cJ_{\alpha,h}^{*}(x, c) \,\df\, \inf_{U^h\in \Uadm^h}\cJ_{\alpha,h}^{U^h}(x, c)\,.
\end{equation}
%%%%%%%%%%%%%%%%%%
\textbf{Discrete-time Ergodic Cost:} For each $U^h\in \Uadm^h$, the associated infinite horizon average cost function is defined as
\begin{align}\label{MDP1_cost}
\sE_h(x,U^h):=\limsup_{N\to\infty}\frac{1}{Nh}\Exp_{x}^{U^h}\left[\sum_{k=0}^{N-1} c_h(X_k^h,U_k^h)\right]
\end{align}
The optimal ergodic cost function is defined as
\begin{align}\label{MDP1_optcost}
\sE_h^*(x):= \inf_{U^h\in \Uadm^h} \sE_h(x,U^h)\,.
\end{align}
%%%%%%%%%%%%%%%%%
% \textbf{Cost up to an Exit Time:} For each $U^h\in \Uadm^h$, the associated cost of the approximating discrete-time model is given by
% \begin{equation}\label{EExitAprx}
% \hat{\cJ}_{e,h}^{U^h}(x) \,\df\, \Exp_x^{U^h} \left[\sum_{k=0}^{\tau^{h}} \prod_{k'=0}^{k}{\beta (X_{k'}^{h}, U_{k'}^{h})} c_{h}(X_k^h, U_k^h) + \prod_{k'=0}^{\tau^{h}}{\beta (X_{k'}^h, U_{k'}^h) c_{e}(X_{\tau^{h}}^h)} \right],\quad x\in\Rd\,,
% \end{equation} where $\beta (X_{k'}^h, U_{k'}^{h}) \df e^{-h\delta(X_{k'}^h, U_{k'}^{h})}$ and $\tau^h \df \inf\{k: X_k^h\notin O\}$\,. The optimal cost is defined as
% \begin{equation}\label{E2.1APM1E}
% \hat{\cJ}_{e,h}^{*}(x)\,\df\, \inf_{U^h\in \Uadm^h}\hat{\cJ}_{e,h}^{U^h}(x)\,.
% \end{equation}
\begin{remark}\label{R1}
An important property of the MDP model we will make use of is the following one: suppose that we are given an admissible policy $U^h\in\Uadm^h$ defined for the MDP, we define the following continuous-time interpolated control process $U^{h}(\cdot)$ defined as
\begin{align}\label{pccontrol}
U^{h}(t)= U^h_k \text{ for } t\in[kh, (k+1)h)
\end{align}
which is a piece-wise constant control process. Then, the controlled Markov chain state process $X_k^h$ under the policy $U^h$ and the controlled diffusion process  $\tilde{X}^h_t$ under the control process defined in (\ref{pccontrol}) have the same distributions at the sampling instances if they start from the same initial points, that is, for any $k\in \Zplus$, $$X_k^h\sim \tilde{X}^h_{kh}\,.$$
\end{remark}
For any $U^h\in \Uadm^h$ the interpolated continuous time policy $U^h(\cdot)$ is defined as in (\ref{pccontrol}). The set of all interpolated continuous time admissible policies is denoted by $\bar{\Uadm}^h$ and set of all interpolated continuous time stationary Markov strategies is denoted by $\bar{\Uadm}_{\mathsf{sm}}^h$\,. Also, for any discrete-time Markov chain $\{X^h_k\}$, the associated continuous-time interpolated process $X^{h}(\cdot)$ is given by
\begin{align}\label{pcState}
X^{h}(t)= X^h_k \text{ for } t\in[kh, (k+1)h)\,.
\end{align} Thus, the sample paths of the continuous-time interpolated process $X^{h}(\cdot)$ are right continuous with left limits. This leads us to consider the function space 
\begin{align*}
\sD(\Rd; 0, \infty)& \df \{\omega:[0, \infty)\to \Rd \mid \omega(\cdot) \,\,\text{is right} \nonumber\\
&\text{continuous and have left limits at every}\,\, t > 0\}\,.
\end{align*} We will consider the space $\sD(\Rd; 0, \infty)$ endowed with the Skorokhod topology (for details see \cite[Section~16, p. 166]{PBill-book})\,.

Let $\cC_{BL}(\Rd)$ be the space of bounded Lipschitz continuous functions on $\Rd$, i.e., 
\begin{align}\label{BoundLips}
\cC_{BL}(\Rd) & \df \{f\in \cC(\Rd): f \,\,\text{is Lipschitz continuous and}\nonumber\\
&\,\,\norm{f}_{BL}\df\sup_{x\neq y}\frac{|f(x) - f(y)|}{|x-y|}  + \sup_{x\in \Rd} |f(x)| < \infty\}\,. 
\end{align}

In order to utilize the weak convergence technique, we introduce the relaxed control representation of the control policies (for more details see \cite[Section~9.5]{KD92})\,. 
Let $\fB(\Act\times [0, \infty))$ be the $\sigma$-algebra of Borel subsets of $\Act\times [0, \infty)$\,. Then for any Borel measure $\hat{m}$ on $\fB(\Act\times [0, \infty))$, satisfying $\hat{m}(\Act\times [0, t]) = t$ for all $t\geq 0$, it is easy to see that there exists a measure $\hat{m}_{t}$ on $\fB(\Act)$ such that $\hat{m}(\D\zeta, \D t) = \hat{m}_t(\D\zeta) \D t$\,. Let 
\begin{align*}
&\fM(\infty) \df \{\hat{m}: \hat{m}\,\,\text{is a Borel measure on}\,\,\fB(\Act\times [0, \infty))\nonumber\\
&\,\,\text{satisfying}\,\, \hat{m}(\Act\times [0,t]) = t, \text{for any}\,\, t\geq  0,\,\, \text{with}\,\, \hat{m}_t\in \cP(\Act)\}\,
\end{align*} The space $\fM(\infty)$ can be metrized by using the Prokhorov metric over $\cP(\Act\times [0, n])$ for $n\in\NN$\,. A sequence $\hat{m}^{n}$ converges to $\hat{m}$ in $\fM(\infty)$, if the normalized restriction of $\hat{m}^{n}$ converges weakly to the normalized restriction $\hat{m}$ on $\cP(\Act\times [0, n])$ for each $n\in \NN$\,. In particular, a sequence $\hat{m}^{n}$ converge to $\hat{m}$ in $\fM(\infty)$ if for any $\phi\in \cC_c(\Act\times [0, \infty))$ we have (this is known as compact weak topology)
$$\int_{\Act\times [0, \infty)} \phi(\zeta, t)\hat{m}^{n}(\D \zeta, \D t)\to \int_{\Act\times [0, \infty)} \phi(\zeta, t)\hat{m}(\D \zeta, \D t)\,.$$

Since $\cP(\Act\times [0, n])$ is complete, separable and compact for each $n\in\NN$, the space $\fM(\infty)$ inherits those properties\,. 

Now, define
\begin{align*}
\cM(\infty) =& \bigg\{m: m\,\,\text{is a random measure taking values in}\nonumber\\
&\,\, \fM(\infty)\,\, \text{satisfying}\,\,\text{for any}\,\,0\leq s <t < \infty,\nonumber\\
&\,\,  m_{[0,s]}\,\,\text{is independent of}\,\, W_t - W_s\,\bigg\}\,.
\end{align*} Since the range space $\fM(\infty)$ is compact \footnote{ By a diagonal argument, every sequence would have a converging subsequence because every compact restriction has a convergent subsequence; thus, the space is compact because it is a metric space.}, we have that any sequence in $\cM(\infty)$ is tight\,.   

In this article our primary goal is to show the following robustness result with respect to discrete-time approximation of the controlled diffusion models: 
\begin{itemize}
\item[•] {\it For discounted cost criterion:} We establish continuity of value functions and provide sufficient conditions which ensure near-optimality of optimal controls designed from the discretized models in controlled diffusion model (see Theorem~\ref{TD1.2} (continuity), Theorem~\ref{TD1.3} (near optimality)).
\item[•] {\it For ergodic cost criterion:} Under Lyapunov type stability assumptions we establish the continuity of value functions and exploiting the continuity results we derive the near-optimality of ergodic optimal controls designed for discrete-time models applied to actual continuous-time systems (see Theorem~\ref{TContValue1A} (continuity), Theorem~\ref{TNearOpt1A} (near optimality)).
\end{itemize} 
%%%%%%%%%%%%%%%%%%%%%%%%%%%%%%%%%%%%%%%%%%%%%%

\section{Supporting Results}
In this section, we show that the continuous-time interpolated process of the sampled discrete time Markov chain converges weakly to the diffusion process as the parameter of discretization approaches to zero\,. 
\begin{theorem}\label{TConvChain} 
Suppose Assumptions \hyperlink{A1}{{(A1)}}--\hyperlink{A2}{{(A2)}} hold. Let $v^h\in \Uadm_{\mathsf{sm}}^h$ and $\{X_n^h\}_{n\geq 1}$ be the associated chain with initial condition $x_0^h$. If $x_0^h\to x$. Then the continuous time interpolated process $(X^h(\cdot), v^h(\cdot))$ is tight. Let $(X_{\cdot}, U_{\cdot})$ be a limit of a weakly convergent sub-sequence then $U\in \Uadm$ and $X$ is a solution of \cref{E1.1} under $U\in \Uadm$\,. 
\end{theorem}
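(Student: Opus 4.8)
\medskip
\noindent\emph{Proof strategy.}\quad The plan is to combine the exact SDE representation of the sampled chain (\cref{R1}), a Kolmogorov-type tightness estimate coming from the boundedness in \hyperlink{A1}{(A1)}, the compactness of the relaxed-control space $\fM(\infty)$, and the classical martingale-problem characterization of \cref{E1.1}. First I would realize the chain as a diffusion: for fixed $h$, let $\tilde X^h$ be the (unique, by \hyperlink{A1}{(A1)}) strong solution of
\[
\D\tilde X^h_t \,=\, b\bigl(\tilde X^h_t,\, v^h(\tilde X^h_{\kappa_h(t)})\bigr)\,\D t + \upsigma(\tilde X^h_t)\,\D W_t,\qquad \tilde X^h_0 = x^h_0,
\]
where $\kappa_h(t):=h\lfloor t/h\rfloor$; this SDE is well posed since on each $[kh,(k+1)h)$ the drift is $b(\cdot,\text{const})$ and $\upsigma$ is Lipschitz. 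Then $\tilde X^h_{kh}=X^h_k$, so the interpolation $X^h(\cdot)$ of \cref{pcState} equals $\tilde X^h_{\kappa_h(\cdot)}$, while the interpolated control $v^h(\cdot)$ of \cref{pccontrol}, read as a random measure $m^h_t(\D\zeta):=v^h(\tilde X^h_{\kappa_h(t)})(\D\zeta)$, is an element of $\cM(\infty)$.

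For tightness: since $b,\upsigma$ are bounded and $\sup_h\abs{x^h_0}<\infty$, a standard moment estimate (Burkholder--Davis--Gundy for the stochastic integral) gives $\Exp\bigl[\abs{\tilde X^h_t-\tilde X^h_s}^4\bigr]\le C_T\abs{t-s}^2$ for all $0\le s\le t\le T$, uniformly in $h$; hence $\{\tilde X^h(\cdot)\}_h$ is tight in $\cC([0,\infty);\Rd)$ by Kolmogorov's criterion, and the same bound yields $\sup_{s\le T}\abs{X^h(s)-\tilde X^h_s}\to 0$ in probability, so $\{X^h(\cdot)\}_h$ is tight in $\sD(\Rd;0,\infty)$ with every subsequential limit a.s.\ continuous and equal in law to a limit of $\tilde X^h(\cdot)$. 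Tightness of $\{m^h\}$ is automatic because $\fM(\infty)$ is compact. Hence $\{(X^h(\cdot),v^h(\cdot))\}_h$ is tight.

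To identify a subsequential limit $(X,m)$ (with $m$ playing the role of the stated $U$), I would pass along a weakly convergent subsequence and invoke Skorokhod's representation theorem to obtain, on a common probability space, $\tilde X^{h_n}\to X$ uniformly on compacts a.s.\ and $m^{h_n}\to m$ in $\fM(\infty)$ a.s. For each $f\in\cC^{\infty}_{\mathrm{c}}(\Rd)$, Itô's formula shows that $f(\tilde X^{h_n}_t)-f(\tilde X^{h_n}_0)-\int_0^t \sL_{m^{h_n}_r}f(\tilde X^{h_n}_r)\,\D r$ is a martingale. Letting $n\to\infty$, using continuity and boundedness of $b,\upsigma,\grad f,\grad^2 f$, the elementary fact that $\int_0^t g^n(r,\zeta)\,m^{h_n}_r(\D\zeta)\,\D r\to\int_0^t g(r,\zeta)\,m_r(\D\zeta)\,\D r$ whenever $g^n\to g$ uniformly on compact subsets of $\Act\times[0,t]$, and the moment bound above (for the uniform integrability needed to preserve the martingale property), one concludes that $f(X_t)-f(X_0)-\int_0^t \sL_{m_r}f(X_r)\,\D r$ is a martingale for every such $f$; together with the non-anticipativity inherited from $\cM(\infty)$, this means $(X,m)$ solves the relaxed martingale problem for \cref{E1.1}. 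Finally, the standard representation theorem (see, e.g., \cite[Chapter~2]{ABG-book}) provides a $d$-dimensional Wiener process $\tilde W$ with $\tilde W_t-\tilde W_s$ independent of $m_{[0,s]}$ such that $X$ solves \cref{E1.1} driven by $\tilde W$ with relaxed control $U_\cdot:=m_\cdot$; hence $U\in\Uadm$, as claimed.

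The main obstacle is the identification step, specifically the bookkeeping caused by the drift in the prelimit being evaluated with the control at the \emph{sampled} state $\tilde X^{h_n}_{\kappa_{h_n}(t)}$ rather than at $\tilde X^{h_n}_t$. Working throughout with the relaxed control $m^h$ --- rather than attempting to pass to the limit in $v^h$ directly --- neutralizes this, reducing the matter to the joint continuity of $(\omega,m)\mapsto\int_0^t\!\int_{\Act} b(\omega(r),\zeta)\cdot\grad f(\omega(r))\,m_r(\D\zeta)\,\D r$ on $\cC([0,t];\Rd)\times\fM(\infty)$ and to the martingale-problem representation theorem; both are standard, but each leans on the uniform moment bounds established above.
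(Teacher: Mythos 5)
Your proposal is correct and follows essentially the same route as the paper's proof: realize the chain as the solution of \cref{E1.1} under the piecewise-constant (relaxed) control, obtain tightness of the state process from uniform moment bounds and of the control from compactness of $\fM(\infty)$, and identify the subsequential limit via the martingale problem after a Skorokhod representation. The only differences are cosmetic --- you use Kolmogorov's fourth-moment criterion where the paper uses Aldous's criterion, and you apply It\^o's formula to $\cC_{\mathrm{c}}^{\infty}$ test functions directly where the paper mollifies $\cC_{\mathrm{c}}^{2}$ functions and tracks the resulting error terms $\delta_k^h$ explicitly.
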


\begin{proof}
Since the range space $\fM(\infty)$ is compact, viewing $v^{h}(\cdot)$ as an element of $\cM(\infty)$, it is easy to see that the sequence $v^{h}(\cdot)$ is tight\,. Hence, along a sub-sequence $v^{h}(\cdot)$ converges weakly to some $U\in\Uadm$ (since weak convergence preserves non-anticipativity, see \cite[Lemma~1.2, Lemma~1.3]{Bor-book}). 

\textbf{Step-1:} Let $\tilde{X}^h_t$ be the solution of (\ref{E1.1}) under the control policy $v^h(\cdot)$ with initial condition $x^h_0$\,. Then for any finite stopping time $\tau$ (with respect to the filtration generated by $X^{h}(\cdot)$) and any positive constant $\delta$, we have 
\begin{align}\label{EtightA}
&\Exp\big[|X^{h}(\tau + \delta) - X^{h}(\tau)|^2\big] \nonumber\\
\leq & \Exp\big[|\int_{\tau}^{\tau +\delta} \left(b(\tilde{X}^h_s, v^h(\tilde{X}^h_s)) \D s + \upsigma(\tilde{X}^h_s)dW_s\right)|^2\big]\nonumber\\
\leq & 2(\|b\|_{\infty}^2 \delta +  \|\upsigma\|_{\infty}^2)\delta\,.
\end{align} Therefore, by Aldous's tightness criterion (see \cite[Theorem~16.10]{PBill-book}) the continuous-time interpolated process $X^{h}(\cdot)$ is tight in $\sD(\Rd; 0, \infty)$\,. 
%%%%%%%%%%%%%%%%%%%%%%%%%%%%%%%%%%%%%%%%%%%%%%%%%%%%%
Since $(X^h(\cdot),v^h(\cdot))$ is tight, let $(\hat{X}(\cdot),U)$ be the limit of a weakly convergent sub-sequence. Thus, by Skorohod's representation theorem (see \cite[Theorem~6.7]{PBill-book}), over a common probability space we have $X^h(\cdot, \omega)\to \hat{X}_{\cdot}(\omega)$ (almost surely in $\omega$ under Skorokhod metric) and $v^n(\cdot , \omega) \to U_{\cdot}(\omega)$ weakly as a $\cP([0,t]\times\Act)$-valued probability measure sequence, almost surely (in $\omega$), that is
\begin{align}\label{ETConvChain1AB}
&\lim_{h\to\infty}\int_{0}^{t}\int_{\Act}\hat{f}(s, \zeta)v^{h}(s,\omega)(\D \zeta)\D s
= \int_{0}^{t}\int_{\Act}\hat{f}(s, \zeta)U_s(\omega)(\D \zeta)\D s
\end{align} almost surely in $\omega$, for any $\hat{f}\in \cC_{b}([0,t]\times \Act)$ and $t \geq 0$\,.
%%%%%%%%%%%%%%%%%%%%%%%%%%%%%%%%%%

\textbf{Step-2:} Let $f\in\cC_c^{2}(\Rd)$ and $\phi \in \cC_c^{\infty}(\Rd)$ with $\phi = 0$ for $|x|>1$ and $\int_{\Rd}\phi(x) d x = 1$\,. For $\epsilon>0$, define $\phi^{\epsilon}(x) = \epsilon^{-d}\phi(\frac{x}{\epsilon})$ and let $f^{\epsilon}(x) := \phi^{\epsilon}* f(x)$ (convolution with respect to $x$). Since, $f\in\cC_c^{2}(\Rd)$ it is well known that $\frac{\partial f^{\epsilon}(x)}{\partial x_{i}} =  \phi^{\epsilon}*\frac{\partial f(x)}{\partial x_{i}}(x)$, $\frac{\partial^2 f^{\epsilon}(x)}{\partial x_{i}\partial x_{j}} =  \phi^{\epsilon}*\frac{\partial^2 f}{\partial x_{i}\partial x_{j}}(x)$ and $\|f^{\epsilon}\|_{\infty} \leq \|f\|_{\infty}$, $\|\frac{\partial f^{\epsilon}}{\partial x_{i}}\|_{\infty} \leq \|\frac{\partial f}{\partial x_{i}}\|_{\infty}$, $\|\frac{\partial^2 f^{\epsilon}}{\partial x_{i}\partial x_{j}}\|_{\infty} \leq \|\frac{\partial^2 f}{\partial x_{i}\partial x_{j}}\|_{\infty}$ for all $i,j= 1,\dots , d$\,. Moreover, as $\epsilon\to 0$, we have   
\begin{equation}\label{ETConvChain1Molli}
\|f^{\epsilon} - f\|_{\infty} + \|\frac{\partial f^{\epsilon}}{\partial x_{i}} - \frac{\partial f}{\partial x_{i}}\|_{\infty} + \|\frac{\partial^2 f^{\epsilon}}{\partial x_{i}\partial x_{j}} - \frac{\partial^2 f}{\partial x_{i}\partial x_{j}}\|_{\infty} \to 0\,.   
\end{equation}

Now, in view of Remark~\ref{R1}, by It$\hat{\rm o}$-Krylov formula, we obtain the following
\begin{align}\label{ETConvChain1AA} 
&\Exp_{k}^{h}\left[f^{\epsilon}(X_{k+1}^h)\right] - f^{\epsilon}(X_{k}^h) \nonumber\\
& = \Exp_{k}^{h}\bigg[\int_{kh}^{(k+1)h}\bigg( \trace\bigl(a(\tilde{X}^h_s)\grad^2 f^{\epsilon}(\tilde{X}^h_s)\bigr) + \int_{\Act}\,b(\tilde{X}^h_s,\zeta)\cdot \grad f^{\epsilon}(\tilde{X}^h_s)v^h(s)(\D \zeta) \bigg) \D s\bigg]\nonumber\\
= & \Exp_{k}^{h}\bigg[\int_{kh}^{(k+1)h}\bigg( \trace\bigl(a(X^h(s))\grad^2 f^{\epsilon}(X^h(s))\bigr) + \int_{\Act}\,b(X^h(s),\zeta)\cdot \grad f^{\epsilon}(X^h(s))v^h(s)(\D \zeta) \bigg) \D s\bigg] + \delta_k^h\,, \end{align} where $\Exp_{k}^{h}$ is the conditional expectation with respect to the $\sigma$-algebra generated by $\{X_m^{h}: m\leq k\}$ and
\begin{align*}
\delta_k^h = & \Exp_{k}^{h}\bigg[\int_{kh}^{(k+1)h}\bigg( \trace\bigl(a(\tilde{X}^h_s)\grad^2 f^{\epsilon}(\tilde{X}^h_s)\bigr) +\int_{\Act}\,b(\tilde{X}^h_s,\zeta)\cdot \grad f^{\epsilon}(\tilde{X}^h_s)v^h(s)(\D \zeta) \nonumber\\
& - \trace\bigl(a(X^h(s))\grad^2 f^{\epsilon}(X^h(s))\bigr) - \int_{\Act}\,b(X^h(s),\zeta)\cdot \grad f^{\epsilon}(X^h(s))v^h(s)(\D \zeta) \bigg) \D s\bigg] \,. 
\end{align*} Now, it is easy to see that 
\begin{equation}\label{EDiffApprox1}
\Exp\left[|\delta_k^h|^2\right] \leq \hat{C}h\int_{kh}^{(k+1)h} \Exp\left[|\tilde{X}^h_s - X^h(s)|^2 \right]\D s,    
\end{equation}
 for some positive constant $\hat{C}$, which depends on the Lipschitz constants of $a, b, \grad^2 f^{\epsilon}, \grad f^{\epsilon}$\,.

\textbf{Step-3:} Recall $n_t\df \max\{n: nh \leq t\}$\,. Since $b, \upsigma$ are bounded, for any $s\geq 0$ we have
\begin{align*}
\Exp\left( |X^h(s) - \tilde{X}^h_s|^2 \right) = \Exp\left( |X^h_{n_s} - \tilde{X}^h_s|^2 \right) \leq & 2\Exp\left(|\int_{n_sh}^{s}b(\tilde{X}_r, v^h(r))dr|^2\right) + 2\Exp\left(|\int_{n_s h}^{s}\sigma(\tilde{X}_r)dW_r|^2\right)\nonumber\\
& \leq 2(\|b\|_{\infty}^2 h +  \|\upsigma\|_{\infty}^2)h\,.
\end{align*} 
Thus, from (\ref{EDiffApprox1}) we deduce that
\begin{equation}\label{EDiffApprox1A}
\Exp\left[|\delta_k^h|\right] \leq \left(2\hat{C}h^3(\|b\|_{\infty}^2 h +  \|\upsigma\|_{\infty}^2)\right)^{\frac{1}{2}}\,.    
\end{equation} Let $\Exp_{t}^{h}$ be the conditional expectation with respect to the $\sigma$-algebra generated by $\{X^{h}(s): s\leq t\}$. Hence from (\ref{ETConvChain1AA}) and (\ref{EDiffApprox1A}), for any $0\leq s,t < \infty$ it follows that
\begin{align}\label{EDiffApprox1B} 
&\Exp_{t}^{h}\left[f^{\epsilon}(X^h(t+s))\right] - f^{\epsilon}(X^h(s)) \nonumber\\
& = \Exp_{t}^{h}\bigg[\int_{t}^{s+t}\bigg( \trace\bigl(a(X^h(r))\grad^2 f^{\epsilon}(X^h(r))\bigr) +\nonumber\\
& \int_{\Act}\,b(X^h(r),\zeta)\cdot \grad f^{\epsilon}(X^h(r))v^h(r)(\D \zeta) \bigg) \D r\bigg] + \delta^h(t, t+s)\,,   
\end{align} where $\Exp\left[|\delta^h(t, t+s)|\right] \to 0$ as $h\to 0$\,. 
Suppose $p,q$ and $t_i\leq t$ for $i\leq p$ are arbitrary and $g$, $\psi_j$ for $j\leq q$ are continuous functions (in their respective arguments) with compact support\,. Then, from (\ref{EDiffApprox1B}), we get 
\begin{align}\label{EDiffApprox1C} 
&\Exp\bigg[g\big(X^h(t_i), \int_{0}^{t_i}\int_{\Act}\psi(s,\zeta)v^h(s)(\D\zeta)\D s, i\leq p, j\leq q\big)\nonumber\\
&\bigg(f^{\epsilon}(X^h(t+s)) - f^{\epsilon}(X^h(s))  - \nonumber\\
&\int_{t}^{s+t}\bigg(\trace\bigl(a(X^h(r))\grad^2 f^{\epsilon}(X^h(r))\bigr) \nonumber\\
&+ \int_{\Act}\,b(X^h(r),\zeta)\cdot \grad f^{\epsilon}(X^h(r))v^h(r)(\D \zeta) \bigg) \D r\bigg)\bigg] \to 0 \,,
\end{align} as $h\to 0$\,. In view of (\ref{ETConvChain1AB}), by an application of the generalized dominated convergence \cite[Theorem 3.5]{serfozo1982convergence}, we obtain the following
\begin{align*} 
&\Exp\bigg[g\big(\hat{X}_{t_i}, \int_{0}^{t_i}\int_{\Act}\psi(s,\zeta)U_s(\D\zeta)\D s, i\leq p, j\leq q\big)\nonumber\\
&\bigg(f^{\epsilon}(\hat{X}_{t+s}) - f^{\epsilon}(\hat{X}_{s}) -\int_{t}^{s+t}\bigg(\trace\bigl(a(\hat{X}_{r})\grad^2 f^{\epsilon}(\hat{X}_{r})\bigr) \nonumber\\
&+ \int_{\Act}\,b(\hat{X}_{r},\zeta)\cdot \grad f^{\epsilon}(\hat{X}_{r})v^h(s)(\D \zeta) \bigg) \D r\bigg)\bigg] = 0 \,,
\end{align*} Now, using (\ref{ETConvChain1Molli}), by dominated convergence theorem, we deduce that
\begin{align}\label{EDiffApprox1D} 
&\Exp\bigg[g\big(\hat{X}_{t_i}, \int_{0}^{t_i}\int_{\Act}\psi(s,\zeta)U_s(\D\zeta)\D s, i\leq p, j\leq q\big)\nonumber\\
&\bigg(f(\hat{X}_{t+s}) - f(\hat{X}_{s})  -\int_{t}^{s+t}\bigg(\trace\bigl(a(\hat{X}_{r})\grad^2 f(\hat{X}_{r})\bigr) \nonumber\\
&+ \int_{\Act}\,b(\hat{X}_{r},\zeta)\cdot \grad f(\hat{X}_{r})v^h(s)(\D \zeta) \bigg) \D r\bigg)\bigg] = 0 \,,
\end{align} 
\textbf{Step-4:}
It is easy to see that the piecewise linear interpolations of $X_k^h$ are continuous (sample path wise) and differ from $X^{h}(\cdot)$ by $O(h)$. Hence piecewise linear interpolated process is also tight, which is going to have a (weakly) limit with continuous sample path with probability one (as the interpolated process has continuous sample path). Therefore the process $\hat{X}$ is also having continuous sample path with probability one. Thus, in view of \cref{EDiffApprox1D}, by the martingale characterization as in \cite[Theorem~2.2.9]{ABG-book}, it follows that $\hat{X}$ is a unique weak solution of \cref{E1.1} under $U\in \Uadm$\,. This completes the proof\,.
\end{proof}

A critical construction and argument in our paper is via near optimality of Lipschitz policies which is used to establish a converse inequality; which together with the above will prove to be consequential for both discounted and average cost criteria. Near optimality of Lipschitz policies were established in \cite{PYNearsmoothSCL24}, building on \cite{pradhan2022near}.

Let $v\in \Udsm$ be a Lipschitz continuous policy. Then it is easy to see that $v \in \Udsm^h$ for each $h\geq 0$\,. Let $X^{v, h}$ be the parametrized sampled Markov chain (defined as in (\ref{sampled_MDP})) corresponding to the policy $v \in \Udsm^h$ with initial condition $x\in\Rd$\,. Since $v$ is Lipschitz continuous, from (\ref{EtightA}), it is easy to see that the continuous time intetrpolated process $X^{v, h}(\cdot)$ is tight. Now, by exploiting the Lipschitz continuity of $v$ and closely following the proof steps of Theorem~\ref{TConvChain}, we have following convergence result.
\begin{theorem}\label{TConvChainA1} 
Suppose Assumptions \hyperlink{A1}{{(A1)}}--\hyperlink{A2}{{(A2)}} hold. Let $v\in \Udsm$ be a Lipschitz continuous policy\,. Let $\{X_n^{v,h}\}_{n\geq 1}$ be the parametrized sampled chain with initial condition $x_0^h$ under the piece wise constant policy $v\in \Udsm^h$. Also, let $x_0^h\to x$ as $h\to 0$\,. Then the continuous time interpolated process $X^{v,h}(\cdot)$ is tight. Let $X$ be a limit of a weakly convergent sub-sequence then $X$ is a solution of \cref{E1.1} under $v\in \Udsm$ with initial condition $x$\,. 
\end{theorem}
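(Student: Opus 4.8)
The plan is to run the argument of \cref{TConvChain} essentially verbatim, simplified by the fact that the control here is the \emph{fixed} Lipschitz feedback $v$ rather than an arbitrary admissible relaxed control: the interpolated relaxed control process is then just the Dirac mass $r\mapsto\delta_{v(X^{v,h}(r))}$, so no separate control-convergence step is needed. Tightness of $X^{v,h}(\cdot)$ in $\sD(\Rd;0,\infty)$ has already been recorded above via \cref{EtightA} (applied with $v^h=v$) and Aldous's criterion \cite[Theorem~16.10]{PBill-book}. I would pass to a weakly convergent subsequence with limit $X$ and, by Skorokhod's representation theorem \cite[Theorem~6.7]{PBill-book}, assume $X^{v,h}(\cdot)\to X_\cdot$ almost surely in the Skorokhod topology on a common probability space; since $x_0^h\to x$, the limit starts at $x$.

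Next, for $f\in\cC_c^2(\Rd)$ I would introduce the mollifications $f^\epsilon=\phi^\epsilon\ast f$ exactly as in Step~2 of the proof of \cref{TConvChain}, and apply the It\^o--Krylov formula along the sampled chain (using \cref{R1}) to obtain the one-step identity \cref{ETConvChain1AA}, with discretization error $\delta_k^h$ bounded as in \cref{EDiffApprox1,EDiffApprox1A}; the only inputs are boundedness and Lipschitz continuity of $a$, $b$, $\grad f^\epsilon$, $\grad^2 f^\epsilon$, all available under \hyperlink{A1}{(A1)}--\hyperlink{A2}{(A2)}. Summing over $k$ and conditioning on the past of $X^{v,h}(\cdot)$ yields, for all $0\le t,s<\infty$, an identity of the form \cref{EDiffApprox1B} whose drift integrand collapses to $b\bigl(X^{v,h}(r),v(X^{v,h}(r))\bigr)\cdot\grad f^\epsilon(X^{v,h}(r))$, since $v$ is a deterministic feedback.

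The limit is then taken in two stages. As in Step~4 of the proof of \cref{TConvChain}, the piecewise-linear interpolants of $X_k^{v,h}$ are continuous and differ from $X^{v,h}(\cdot)$ by $O(h)$, so the limit $X$ has continuous sample paths; consequently the almost-sure Skorokhod convergence $X^{v,h}(\cdot)\to X_\cdot$ is locally uniform, and continuity of $v$ gives $v(X^{v,h}(\cdot))\to v(X_\cdot)$ as well as $b(X^{v,h}(\cdot),v(X^{v,h}(\cdot)))\to b(X_\cdot,v(X_\cdot))$ pointwise in $(\omega,r)$. Since $b$, $a$, $\grad f^\epsilon$, $\grad^2 f^\epsilon$ are bounded, ordinary dominated convergence (no generalized dominated convergence of the type invoked for \cref{TConvChain} is needed here) lets me send $h\to0$ in the pre-limit identity tested against bounded continuous functionals of $X^{v,h}(\cdot)$, and then $\epsilon\to0$ using \cref{ETConvChain1Molli}. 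The outcome is that $f(X_t)-f(X_0)-\int_0^t\sL_v f(X_r)\,\D r$ is a martingale for every $f\in\cC_c^2(\Rd)$, so by the martingale characterization \cite[Theorem~2.2.9]{ABG-book} together with well-posedness of \cref{E1.1} under the stationary Markov control $v$ \cite[Theorem~2.2.12]{ABG-book}, $X$ is the unique weak solution of \cref{E1.1} under $v$ with $X_0=x$; uniqueness in law also promotes the subsequential convergence to convergence of the whole family as $h\to0$.

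The main obstacle here — modest, since \cref{TConvChain} does the bulk of the work — is the passage $v(X^{v,h}(\cdot))\to v(X_\cdot)$ inside the drift term: it requires simultaneously the continuity of $v$ and the path-continuity of the limit $X$ (so that Skorokhod convergence strengthens to locally uniform convergence), which is precisely where the Lipschitz hypothesis on $v$ and the piecewise-linear interpolation remark enter. Everything else is a direct specialization of the argument already carried out for \cref{TConvChain}.
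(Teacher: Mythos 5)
Your proposal is correct and follows essentially the same route as the paper, which proves \cref{TConvChainA1} precisely by specializing the argument of \cref{TConvChain} to a fixed Lipschitz feedback: tightness via \cref{EtightA} and Aldous's criterion, the mollified It\^o--Krylov identity with the discretization error controlled as in \cref{EDiffApprox1,EDiffApprox1A}, path-continuity of the limit via the piecewise-linear interpolation remark, and the martingale characterization of \cite[Theorem~2.2.9]{ABG-book}. Your observation that the generalized dominated convergence step is replaced by ordinary dominated convergence, since locally uniform convergence of $X^{v,h}(\cdot)$ to the continuous limit together with continuity of $v$ and $b$ handles the drift term directly, is exactly the point the paper summarizes as ``exploiting the Lipschitz continuity of $v$.''
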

%%%%%%%%%%%%%%%%%%%%%%%%%%%%%%%%%%%%%%%%%%%%
\section{Analysis of Discounted Cost Criterion}
In this section we study the near-optimality problem for the discounted cost criterion. 

From \cite[Theorem~3.5.6]{ABG-book}, we have the following characterization of the optimal $\alpha$-discounted cost $V_{\alpha}$\,.

\begin{theorem}\label{TD1.1}
Suppose Assumptions \hyperlink{A1}{{(A1)}}--\hyperlink{A2}{{(A2)}} hold. Then the optimal discounted cost $V_{\alpha}$ defined in \cref{OPDcost} is the unique solution in $\cC^2(\Rd)\cap\cC_b(\Rd)$ of the HJB equation
\begin{equation}\label{OptDHJB}
\min_{\zeta \in\Act}\left[\sL_{\zeta}V_{\alpha}(x) + c(x, \zeta)\right] = \alpha V_{\alpha}(x) \,,\quad \text{for all\ }\,\, x\in\Rd\,.
\end{equation}
Moreover, $v^*\in \Usm$ is $\alpha$-discounted optimal control if and only if it is a measurable minimizing selector of \cref{OptDHJB}, i.e., for a.e.\, $x\in\Rd$
\begin{align}\label{OtpHJBSelc}
&\min_{\zeta\in \Act}\left[ b(x, \zeta)\cdot \grad V_{\alpha}(x) + c(x, \zeta)\right]\nonumber\\
&= b(x,v^*(x))\cdot \grad V_{\alpha}(x) + c(x, v^*(x))\,.
\end{align}
\end{theorem}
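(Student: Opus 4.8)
The plan is to follow the standard dynamic-programming route for non-degenerate controlled diffusions, exactly as in \cite[Chapter~3]{ABG-book}, so I will only indicate the main steps. The first step is to produce \emph{some} solution $\varphi\in\cC^2(\Rd)\cap\cC_b(\Rd)$ of \cref{OptDHJB}. Since $c\ge 0$ is bounded, one expects the a priori bound $0\le\varphi\le\alpha^{-1}\norm{c}_\infty$, and the natural construction is to solve the Dirichlet problem for the HJB operator $w\mapsto\min_{\zeta\in\Act}\bigl[\sL_\zeta w+c(\cdot,\zeta)\bigr]-\alpha w$ on each ball $\sB_R$. This operator is uniformly elliptic on $\sB_R$ by \hyperlink{A2}{(A2)} and has coefficients that are bounded, Lipschitz in $x$ and continuous in $\zeta$ by \hyperlink{A1}{(A1)}; one obtains $\Sobl^{2,p}$ a priori estimates uniform in $R$, extracts a subsequential limit as $R\to\infty$, and upgrades it by interior elliptic (Calderón--Zygmund/Schauder) regularity to a genuine $\cC^2$ solution on all of $\Rd$ satisfying $0\le\varphi\le\alpha^{-1}\norm{c}_\infty$.

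The second step is a verification argument which simultaneously identifies $\varphi$ with $V_\alpha$ and exhibits an optimal control. Fix $U\in\Uadm$ and apply the It\^o--Krylov formula to $s\mapsto e^{-\alpha s}\varphi(X_s)$ on $[0,\uptau_R\wedge t]$; since $\sL_{U_s}\varphi(X_s)+c(X_s,U_s)\ge\alpha\varphi(X_s)$ pointwise by the minimum in \cref{OptDHJB}, rearranging and taking expectations gives
\begin{align*}
\varphi(x)\,\le\,\Exp_x^U\Bigl[\int_0^{\uptau_R\wedge t}e^{-\alpha s}c(X_s,U_s)\,\D s\Bigr]+\Exp_x^U\bigl[e^{-\alpha(\uptau_R\wedge t)}\varphi(X_{\uptau_R\wedge t})\bigr]\,.
\end{align*}
Letting $R\to\infty$ (the process is conservative under \hyperlink{A1}{(A1)}, so $\uptau_R\uparrow\infty$ a.s.) and then $t\to\infty$, using $0\le\varphi\le\alpha^{-1}\norm{c}_\infty$ and dominated convergence, yields $\varphi(x)\le\cJ_\alpha^U(x,c)$ for every $U\in\Uadm$, hence $\varphi\le V_\alpha$. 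For the reverse inequality, invoke a measurable selection theorem (see, e.g., \cite[Section~2.4]{ABG-book}): as $\Act$ is compact and $\zeta\mapsto b(x,\zeta)\cdot\grad\varphi(x)+c(x,\zeta)$ is continuous, there is a measurable $v^*\colon\Rd\to\Act$ attaining the minimum, so that $\sL_{v^*}\varphi+c(\cdot,v^*)=\alpha\varphi$ \emph{with equality}. Repeating the It\^o--Krylov computation under $v^*$, now with equalities throughout, gives $\varphi(x)=\cJ_\alpha^{v^*}(x,c)\ge V_\alpha(x)$. Combining the two inequalities, $\varphi=V_\alpha$ and $v^*$ is optimal; since $\varphi$ was an arbitrary $\cC^2(\Rd)\cap\cC_b(\Rd)$ solution of \cref{OptDHJB}, this also gives uniqueness (alternatively, a comparison principle for the HJB operator applies).

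For the "if and only if" characterization of optimal stationary controls: a measurable selector $v^*$ of \cref{OtpHJBSelc} is optimal by the argument just given. Conversely, if $v\in\Usm$ is optimal then $\cJ_\alpha^v(\cdot,c)=V_\alpha$; but $\cJ_\alpha^v(\cdot,c)$ is the unique bounded classical solution of the \emph{linear} equation $\sL_v w+c(\cdot,v)=\alpha w$ (again by It\^o--Krylov combined with non-degenerate elliptic regularity of this linear operator), so $V_\alpha$ solves $\sL_v V_\alpha+c(\cdot,v)=\alpha V_\alpha$; subtracting this identity from \cref{OptDHJB} forces $b(x,v(x))\cdot\grad V_\alpha(x)+c(x,v(x))=\min_{\zeta\in\Act}\bigl[b(x,\zeta)\cdot\grad V_\alpha(x)+c(x,\zeta)\bigr]$ for a.e.\ $x$, i.e.\ $v$ is a.e.\ a minimizing selector. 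The main obstacle is the first step --- the global construction on $\Rd$ and $\cC^2$ regularity of the HJB solution, which relies on interior elliptic estimates uniform in the truncation radius --- while everything after it is a routine stochastic-calculus verification; since this is precisely \cite[Theorem~3.5.6]{ABG-book}, we simply cite it.
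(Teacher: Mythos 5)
The paper offers no proof of this theorem at all --- it is stated as a direct import of \cite[Theorem~3.5.6]{ABG-book} --- and your proposal ends by citing exactly that result, so you are taking the same route. Your sketch of the underlying argument (Dirichlet problems on balls with uniform $\Sobl^{2,p}$ estimates, verification via It\^o--Krylov, measurable selection for the forward direction, and the linear-equation/uniqueness argument for the converse of the selector characterization) is a faithful and correct outline of how the cited theorem is actually proved.
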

\begin{remark}
The assumption that the running cost is Lipschitz continuous in its first argument uniformly with respect to the second is used to obtain $\cC^2(\Rd)$ solution of the HJB equation \cref{OptDHJB}. If we do not have this uniformly Lipschitz assumption, one can show that the HJB equation admits a solution in $\Sobl^{2, p}(\Rd)$, $p\geq d+1$,. 
\end{remark}
%%%%%%%%%%%%%%%%%%%%%%%%%%%%%%%%%
Also, from \cite[Theorem~2.1]{ABEGM-93}, we have the following complete characterization of the $\alpha$-discounted optimal policy in the space of stationary Markov strategies for the above discrete-time approximated model. 
\begin{theorem}\label{TD1.2Discre}
Suppose Assumptions \hyperlink{A1}{{(A1)}}--\hyperlink{A2}{{(A2)}} hold. Then the optimal discounted cost $\cJ_{\alpha,h}^{*}(x, c)$ defined in \cref{E2.1APM1} is a unique solution of the associated optimality equation
\begin{align}\label{DOptiEq}
&\cJ_{\alpha,h}^{*}(x, c) = \min_{\zeta \in\Act}\left[\beta\int_{\Rd}\cJ_{\alpha,h}^{*}(y, c) \cT_h(dy | x, \zeta) + c_h(x, \zeta)\right]\,,
\end{align} for all $x\in\Rd$\,. Moreover, $\cJ_{\alpha,h}^{*}(x, c)$ is a bounded, nonnegative and lower semi-continuous function\,, and $v^{h*}\in \Udsm^h$ is $\alpha$-discounted optimal control if and only if it is a measurable minimizing selector of (\ref{DOptiEq}), i.e.,
\begin{align}\label{DOptiEq1}
&\min_{\zeta \in\Act}\left[\beta\int_{\Rd}\cJ_{\alpha,h}^{*}(y, c) \cT_h(dy | x, \zeta) + c_h(x, \zeta)\right]\nonumber\\
& =\beta\int_{\Rd}\cJ_{\alpha,h}^{*}(y, c) \cT_h(dy |x, v^{h*}(x)) + c_h(x, v^{h*}(x))\,,
\end{align} for a.e.\, $x\in\Rd$\,.   
\end{theorem}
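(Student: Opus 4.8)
\emph{Proof strategy.} The plan is to recognize the discrete-time model as a discounted-cost Markov decision process on the Borel state space $\Rd$ with compact action set $\Act$, bounded nonnegative one-stage cost $c_h=h\,c$, transition kernel $\cT_h$ and discount factor $\beta=e^{-\alpha h}\in(0,1)$, and then to run the classical value-iteration/verification argument. The single ingredient that uses the hypotheses \hyperlink{A1}{(A1)}--\hyperlink{A2}{(A2)} is the regularity of $\cT_h$: I would first check that $(x,\zeta)\mapsto\cT_h(\cdot\,|\,x,\zeta)$ is continuous for the topology of weak convergence on $\cP(\Rd)$ (indeed Lipschitz for the Wasserstein metric), which follows from a standard Gronwall estimate for \cref{sampled_MDP} over $[0,h)$ with the control held constant, using the Lipschitz bound in \hyperlink{A1}{(A1)}. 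Together with boundedness and joint continuity of $c_h$, this places the model in the ``lower semicontinuous model'' framework.

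On the cone $\cC$ of bounded nonnegative lower semicontinuous functions $w\colon\Rd\to\RR_+$ I would then introduce the dynamic programming operator
\begin{equation*}
(Tw)(x)\;\df\;\min_{\zeta\in\Act}\Bigl[\beta\int_{\Rd}w(y)\,\cT_h(\D y\mid x,\zeta)+c_h(x,\zeta)\Bigr].
\end{equation*}
By the portmanteau theorem and the weak continuity of $\cT_h$, the map $\zeta\mapsto\int w\,\cT_h(\D y\mid x,\zeta)$ is lower semicontinuous on the compact set $\Act$, so the minimum is attained and $Tw$ is again bounded, nonnegative and lower semicontinuous (a pointwise minimum over a compact parameter set of a jointly lower semicontinuous function is lower semicontinuous). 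From $\abs{\min_\zeta a(\zeta)-\min_\zeta b(\zeta)}\le\sup_\zeta\abs{a(\zeta)-b(\zeta)}$ one gets that $T$ is a $\beta$-contraction for $\|\cdot\|_\infty$; since $\cC$ is closed under uniform limits, the Banach fixed point theorem yields a unique fixed point $g_h\in\cC$ with $T^{n}\mathbf 0\to g_h$ uniformly, and $0\le c_h\le h\|c\|_\infty$ forces $0\le g_h\le h\|c\|_\infty/(1-\beta)$.

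It then remains to identify $g_h$ with $\cJ_{\alpha,h}^{*}(\cdot,c)$ and to settle the ``if and only if''. For $g_h\le\cJ_{\alpha,h}^{*}$, I would fix any $U^h\in\Uadm^h$, use $g_h(X_k^h)\le\beta\,\Exp_x^{U^h}[g_h(X_{k+1}^h)\mid I_k^h]+c_h(X_k^h,U_k^h)$, multiply by $\beta^k$, sum over $k=0,\dots,N-1$, take expectations and let $N\to\infty$ (the tail $\beta^N\|g_h\|_\infty\to0$, and the cost sum increases to $\cJ_{\alpha,h}^{U^h}(x,c)$ by monotone convergence since $c_h\ge0$); this gives $g_h\le\cJ_{\alpha,h}^{*}$. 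For the reverse inequality I would invoke a measurable selection theorem (the set-valued map $x\mapsto\Argmin_{\zeta\in\Act}\bigl[\beta\int g_h\,\cT_h(\D y\mid x,\zeta)+c_h(x,\zeta)\bigr]$ has nonempty compact values and a measurable graph) to obtain $v^{h*}\in\Udsm^h$ realizing the minimum; the same telescoping identity, now an equality, gives $g_h(x)=\cJ_{\alpha,h}^{v^{h*}}(x,c)\ge\cJ_{\alpha,h}^{*}(x,c)$. Hence $g_h=\cJ_{\alpha,h}^{*}=\cJ_{\alpha,h}^{v^{h*}}$, which yields all the stated properties of $\cJ_{\alpha,h}^{*}$ (boundedness, nonnegativity, lower semicontinuity, the fact that it solves \cref{DOptiEq}, and uniqueness among bounded solutions, since any bounded measurable solution of \cref{DOptiEq} is a fixed point of the $\beta$-contraction $T$ and hence equals $g_h$) together with optimality of every minimizing selector. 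Conversely, if $v^{h*}\in\Udsm^h$ is optimal then $\cJ_{\alpha,h}^{v^{h*}}=\cJ_{\alpha,h}^{*}$ and $\cJ_{\alpha,h}^{v^{h*}}$ satisfies its own linear equation $\cJ_{\alpha,h}^{v^{h*}}(x)=\beta\int\cJ_{\alpha,h}^{v^{h*}}\,\cT_h(\D y\mid x,v^{h*}(x))+c_h(x,v^{h*}(x))$; comparing with \cref{DOptiEq} forces $v^{h*}(x)$ to attain the minimum there for every $x$, i.e.\ \cref{DOptiEq1} holds. I expect the main obstacle to be the regularity input of the first step (the weak Feller property of $\cT_h$ under \hyperlink{A1}{(A1)}--\hyperlink{A2}{(A2)}) together with the measurable selection; once the lower semicontinuous structure is in place, the remaining steps are the textbook discounted-MDP argument, which is why the statement is attributed to \cite[Theorem~2.1]{ABEGM-93}.
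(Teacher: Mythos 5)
Your proposal is correct. Note that the paper itself gives no proof of this theorem; it simply cites \cite[Theorem~2.1]{ABEGM-93}, and your argument is precisely the standard lower-semicontinuous discounted-MDP machinery (weak continuity of $\cT_h$ via a Gronwall estimate, the $\beta$-contraction on bounded nonnegative l.s.c.\ functions, measurable selection, and the verification identity) that underlies that citation. The only point worth making explicit is the passage between the minimum over $\zeta\in\Act$ in \cref{DOptiEq} and the admissible $\pV$-valued (randomized) actions of the model: since $\pv\mapsto\beta\int w\,\cT_h(\D y\mid x,\pv)+c_h(x,\pv)$ is affine in $\pv$ and $\cP(\Act)$ is the closed convex hull of the point masses, the two minima coincide, which is why the optimal selector can be taken in $\Udsm^h$; this is implicit in your verification step but deserves a line.
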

Therefore, we have there exist $v^{h*}\in \Udsm^h$ such that
\begin{equation*}
\cJ_{\alpha,h}^{v^{h*}}(x, c) = \inf_{U^h\in \Uadm^h} \cJ_{\alpha,h}^{U^h}(x, c) =  \cJ_{\alpha,h}^{*}(x, c)\,,
\end{equation*} for all $x\in \Rd$\,. The next theorem shows that the optimal value $\cJ_{\alpha,h}^{*}(x, c)$ of the approximated model converges to the optimal value $V_{\alpha}(x)$ of the diffusion model.
\begin{theorem}\label{TD1.2}
Suppose that Assumptions \hyperlink{A1}{{(A1)}}--\hyperlink{A2}{{(A2)}} hold. Then we have
\begin{equation}
\lim_{h\to 0}\cJ_{\alpha,h}^{*}(x, c)\to V_{\alpha}(x)\,.
\end{equation}
\end{theorem}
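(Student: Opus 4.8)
The plan is to establish the two one-sided bounds
\[
\limsup_{h\to 0}\,\cJ_{\alpha,h}^{*}(x,c)\;\le\;V_{\alpha}(x)\;\le\;\liminf_{h\to 0}\,\cJ_{\alpha,h}^{*}(x,c)\,,
\]
which together give the assertion. Throughout I fix $X_0^h=x$ for every $h$, so the requirement ``$x_0^h\to x$'' in \cref{TConvChain} and \cref{TConvChainA1} is automatic. The bookkeeping observation that makes everything run is that, writing $\kappa_h(s)\df h\lfloor s/h\rfloor$ and letting $X^{h}(\cdot)$, $U^{h}(\cdot)$ denote the continuous-time interpolations (as in \cref{R1}) of a chain and of an admissible discrete-time policy, one has the exact identity
\[
\cJ_{\alpha,h}^{U^h}(x,c)=\Exp_x^{U^h}\Big[\int_0^{\infty} e^{-\alpha\kappa_h(s)}\int_{\Act} c\big(X^h(s),\zeta\big)\,U^h(s)(\D\zeta)\,\D s\Big]\,,
\]
since the integrand is constant and equals $e^{-\alpha kh}c(X_k^h,U_k^h)$ on each interval $[kh,(k+1)h)$. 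This casts both costs in a form amenable to the weak-convergence results already proved.

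\emph{Upper bound.} I would use the near optimality of Lipschitz stationary policies for the discounted controlled diffusion established in \cite{PYNearsmoothSCL24} (building on \cite{pradhan2022near}): given $\epsilon>0$, pick $v_\epsilon\in\Udsm$ Lipschitz continuous with $\cJ_\alpha^{v_\epsilon}(x,c)\le V_\alpha(x)+\epsilon$ (identifying $v_\epsilon$ with the admissible control it induces). Since $v_\epsilon\in\Udsm^h$ for every $h$, we get $\cJ_{\alpha,h}^{*}(x,c)\le\cJ_{\alpha,h}^{v_\epsilon}(x,c)$, so it suffices to show $\cJ_{\alpha,h}^{v_\epsilon}(x,c)\to\cJ_\alpha^{v_\epsilon}(x,c)$. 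By \cref{TConvChainA1} and strong uniqueness for \cref{E1.1} under the Lipschitz policy $v_\epsilon$, the whole family $X^{v_\epsilon,h}(\cdot)$ converges weakly to the diffusion $X^{v_\epsilon}$; on a Skorokhod representation space the limit has continuous paths, so the convergence is a.s.\ uniform on compact time intervals, and hence by continuity of $v_\epsilon$ and of $c$ the integrand in the displayed identity converges a.s.\ for each $s$. Boundedness of $c$ and integrability of $e^{-\alpha s}$ then yield $\cJ_{\alpha,h}^{v_\epsilon}(x,c)\to\cJ_\alpha^{v_\epsilon}(x,c)$ by dominated convergence and Fubini. Thus $\limsup_{h\to 0}\cJ_{\alpha,h}^{*}(x,c)\le V_\alpha(x)+\epsilon$, and letting $\epsilon\downarrow 0$ closes this bound.

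\emph{Lower bound.} Take $h_n\downarrow 0$ realizing $\liminf_{h\to0}\cJ_{\alpha,h}^{*}(x,c)$ and, by \cref{TD1.2Discre}, an optimal $v^{h_n*}\in\Udsm^h$ for the $h_n$-model, so $\cJ_{\alpha,h_n}^{*}(x,c)=\cJ_{\alpha,h_n}^{v^{h_n*}}(x,c)$. Viewing $U^{h_n}(\cdot)$ as a relaxed control in $\cM(\infty)$, \cref{TConvChain} gives tightness of $(X^{h_n}(\cdot),U^{h_n}(\cdot))$ and, along a further subsequence, weak convergence to some $(X_\cdot,U_\cdot)$ with $U\in\Uadm$ and $X$ a solution of \cref{E1.1} under $U$. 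I then pass to the limit in the displayed cost identity, following the weak-convergence method of \cite{KD92} and the proof of \cref{TConvChain}: on the Skorokhod space $X^{h_n}(\cdot)$ converges uniformly on compacts to the continuous path $X_\cdot$, whence $(\zeta,s)\mapsto e^{-\alpha\kappa_{h_n}(s)}c(X^{h_n}(s),\zeta)$ converges uniformly on $\Act\times[0,T]$ to $(\zeta,s)\mapsto e^{-\alpha s}c(X_s,\zeta)$ for each $T$; combining this with weak convergence of the relaxed controls on $\cP(\Act\times[0,T])$ and the uniform smallness of the discounted tail past $T$, and then taking expectations by dominated convergence, gives $\cJ_{\alpha,h_n}^{v^{h_n*}}(x,c)\to\Exp_x^U\big[\int_0^\infty e^{-\alpha s}c(X_s,U_s)\,\D s\big]=\cJ_\alpha^U(x,c)$. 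Since $U\in\Uadm$, we have $\cJ_\alpha^U(x,c)\ge V_\alpha(x)$, hence $\liminf_{h\to0}\cJ_{\alpha,h}^{*}(x,c)=\cJ_\alpha^U(x,c)\ge V_\alpha(x)$.

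Combining the two bounds gives $\lim_{h\to0}\cJ_{\alpha,h}^{*}(x,c)=V_\alpha(x)$. The step I expect to be the main obstacle is the cost-convergence in the lower bound: unlike in the upper bound, the optimal discrete selectors $v^{h*}$ are merely measurable and change with $h$, so continuity of the policy cannot be exploited, and one must route the passage to the limit entirely through the relaxed-control representation, simultaneously controlling the error from replacing $e^{-\alpha\kappa_h(s)}$ by $e^{-\alpha s}$ and from replacing the interpolated chain $X^h(\cdot)$ by the limiting diffusion (precisely where the increment estimates behind \cref{TConvChain} enter), and truncating the discounted tail so as to reduce to a finite horizon on which the joint weak convergence of state and relaxed control provided by \cref{TConvChain} can be combined with the continuity and boundedness of $c$.
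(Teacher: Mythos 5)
Your proposal is correct and follows essentially the same route as the paper: the lower bound via \cref{TConvChain} applied to the interpolations of the optimal discrete policies (viewed as relaxed controls) together with the identity relating the discrete cost to the interpolated continuous-time cost, and the upper bound via a Lipschitz $\epsilon$-optimal policy from \cite{PYNearsmoothSCL24} combined with \cref{TConvChainA1}. The only difference is organizational — you phrase it as separate $\limsup$/$\liminf$ bounds with explicit subsequence extraction, which is a slightly cleaner bookkeeping of the same argument.
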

\begin{proof} 
\textbf{Step-1:}
Since $\beta = e^{-\alpha h}$, it is easy to see that 
\begin{align*}
\bigg|&\Exp_x^{U^h} \left[\sum_{k=0}^{\infty} \beta^k c_h(X_k^h, U_k^h)\right] - \Exp_x^{U^{h}} \left[\int_{0}^{\infty} e^{-\alpha s} c(X^{h}(s), v^{h}(s)) \right] \bigg| \nonumber\\
= & \bigg|\Exp_x^{U^h} \bigg[\sum_{k=0}^{\infty} \int_{kh}^{(k+1)h}\bigg(\beta^k c(X_k^h, U_k^h) - e^{-\alpha s} c(X^{h}(s), v^{h}(s)) \bigg)ds\bigg]\bigg| \nonumber\\ 
\leq &\norm{c}_{\infty}\sum_{k=0}^{\infty} \int_{kh}^{(k+1)h} \abs{\left(e^{-\alpha hk} - e^{-\alpha s} \right)} ds \nonumber\\
\leq & \norm{c}_{\infty}h\left(1 - e^{-\alpha h} \right)\sum_{k=0}^{\infty} e^{-\alpha hk} = \norm{c}_{\infty}h\,.
\end{align*}
Thus, from \cref{E2.1APM}, it is clear that
\begin{equation}\label{E2.1APMA}
\cJ_{\alpha,h}^{U^{h}}(x, c) \,=\, \Exp_x^{U^{h}} \left[\int_{0}^{\infty} e^{-\alpha s} c(X^{h}(s), v^{h}(s)) \right] + \order(h)\,.
\end{equation}
Let us define 
\begin{equation}
D^h(t) = \int_{0}^{t} e^{-\alpha s} c(X^{h*}(s), v^{h*}(s))\D  s
\end{equation}
As in Theorem~\ref{TConvChain}, viewing $v^{h*}(\cdot)$ as an element of $\cM({\infty})$, for some $U^*\in \Uadm$ we have a $v^{h*}(\cdot) \to U^*$ (under compact weak topology) as $h\to 0$\,. Also, from Theorem~\ref{TConvChain}, we have that the associated continuous-time interpolated process $X^{h*}(s)$ converges weakly to the diffusion process $X_t^*$ (solution of (\ref{E1.1}) under policy $U^*$)\,.  

Thus, by Skorohod's representation theorem, over a common probability space we have $X^{h*}(s,\omega)\to X_s^*(\omega)$ for all $s\geq 0$ and almost surely in $\omega$\,. Moreover, for any $t\geq 0$ we have
\begin{align}\label{E2.1APMA1}
&\lim_{h\to\infty}\int_{0}^{t}\int_{\Act}f(s, \zeta)v^{h*}(s,\omega)(\D \zeta)\D s = \int_{0}^{t}\int_{\Act}f(s, \zeta)U_s^*(\omega)(\D \zeta)\D s
\end{align}
for all $f\in \cC_{b}([0, t]\times \Act)$ and almost surely in $\omega$\,. Let 
\begin{equation*}
\bar{D}(t) = \int_{0}^{t} e^{-\alpha s} c(X_s^*, U^*_s)\D  s\,.
\end{equation*}
Now consider
\begin{align}
|D^h(t,\omega) - \bar{D}(t,\omega)|
\leq & |\int_{0}^{t} e^{-\alpha s}c(X^{h*}(s,\omega), v^{h*}(s,\omega))\D s - \int_{0}^{t} e^{-\alpha s}c(X_s^*(\omega), v^{h*}(s,\omega))\D s| \nonumber\\
+ & |\int_{0}^{t} e^{-\alpha s}c(X_s^*(\omega), v^{h*}(s,\omega))\D s - \int_{0}^{t} e^{-\alpha s}c(X_s^*(\omega), U_s^*(\omega))\D s|\,. 
\end{align}
Since the cost function $c$ is Lipschitz continuous in its first argument uniformly with respect to the second, it is easy to see that the first term of the above inequality converges to zero. Also, from \cref{E2.1APMA1}, we have that second term of the above inequality converges to zero. Thus, it follows that along the same sequence, for all $t\geq 0$ 
\begin{equation}\label{E2.1APMA2}
\lim_{h\to 0}D^h(t,\omega) = \bar{D}(t,\omega)\,\,\,\text{almost surely in }\,\, \omega\,.
\end{equation} Therefore, by dominated convergence theorem, for all $x\in \Rd$ we deduce that
\begin{equation*}
\lim_{h\to 0}V_{\alpha}^h(x)\to \cJ_{\alpha}^{U^*}(x, c) \,\, (\geq V_{\alpha}(x))\,.
\end{equation*} 
\textbf{Step-2:}
From the discussion as in \cite[Section 4]{PYNearsmoothSCL24}, building on \cite{pradhan2022near}, it follows that for any $\epsilon > 0$ there exists a Lipschitz continuous $\epsilon$-optimal policy $v^{\epsilon}\in \Udsm$ of (\ref{ECost1}), that is 
\begin{equation}\label{AproxCont1}
\cJ_{\alpha}^{v^{\epsilon}}(x, c) \leq V_{\alpha}(x) + \epsilon\,.
\end{equation} It is easy to see that $v^\epsilon \in \Udsm^h$ for each $h\geq 0$\,. Let $X^{\epsilon, h}$ be the parametrized sampled Markov chain (defined as in (\ref{sampled_MDP})) corresponding to the policy $v^\epsilon \in \Udsm^h$ with initial condition $x\in\Rd$\,. Now, from Theorem~\ref{TConvChainA1}, we have the continuous-time interpolated process $X^{\epsilon, h}(\cdot)$ converges weakly to the diffusion process $X^\epsilon$ corresponding to $v^\epsilon \in \Udsm$ satisfying \cref{E1.1} with initial condition $x\in\Rd$\,. Thus, by Skorohod's representation theorem, for any $t\geq 0,$ we get
\begin{align*}
\lim_{h\to 0} |&\int_{0}^{t} e^{-\alpha s}c(X^{\epsilon, h}(s,\omega), v^{\epsilon}(X^{\epsilon, h}(s,\omega)))\D s \nonumber\\
&- \int_{0}^{t} e^{-\alpha s}c(X_s^{\epsilon}(\omega), v^{\epsilon}(X_s^{\epsilon}(\omega)))\D s| =0\,, \end{align*} almost surely in $\omega$\,. This implies that
\begin{equation}\label{AproxCont2}
\lim_{h\to 0}\cJ_{\alpha, h}^{v^{\epsilon}}(x, c) = \cJ_{\alpha}^{v^{\epsilon}}(x, c)\,.
\end{equation}
Since $v^{h*}\in \Uadm_{\mathsf{sm}}^h$ is the optimal policy of the discrete-time model, it is easy to see that
\begin{equation}\label{AproxCont3}
\cJ_{\alpha, h}^{v^{h*}}(x, c) \leq \cJ_{\alpha, h}^{v^{\epsilon}}(x, c)\,.
\end{equation} Taking limit $h\to 0$, from \cref{AproxCont3}, we deduce that
\begin{equation*}
\cJ_{\alpha}^{U^{*}}(x, c) \leq \cJ_{\alpha}^{v^{\epsilon}}(x, c)\,.
\end{equation*} Thus, in view of \cref{AproxCont1}, it follows that
\begin{equation*}
V_{\alpha}(x) \leq \cJ_{\alpha}^{U^{*}}(x, c) \leq V_{\alpha}(x) + \epsilon\,.
\end{equation*} Since $\epsilon > 0$ is arbitrary, we deduce that $\cJ_{\alpha}^{U^{*}}(x, c) = V_{\alpha}(x)$ for all $x\in \Rd$\,. This completes the proof.
\end{proof}
The following theorem proves the robustness result.
\begin{theorem}\label{TD1.3}
Suppose that Assumptions \hyperlink{A1}{{(A1)}}--\hyperlink{A2}{{(A2)}} hold. Then we have
\begin{equation}
\lim_{h\to 0}\cJ_{\alpha}^{v^{h*}}(x, c) = \cJ_{\alpha}^{v^*}(x, c)\quad\text{a.e.}\,\,\, x\in \Rd\,.
\end{equation}
\end{theorem}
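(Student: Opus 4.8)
The plan is to reduce Theorem~\ref{TD1.3} to the value convergence already obtained in Theorem~\ref{TD1.2}; the only genuinely new ingredient is a quantitative comparison between the cost incurred in the \emph{true} diffusion by the zero-order-hold implementation of $v^{h*}$ and the discrete-time cost $\cJ_{\alpha,h}^{v^{h*}}$. First I would pin down the meaning of $\cJ_\alpha^{v^{h*}}(x,c)$: by Remark~\ref{R1}, applying the policy $v^{h*}\in\Udsm^h$ to \cref{E1.1} in feedback only at the sampling instants (held constant in between) produces exactly the sampled dynamics \cref{sampled_MDP}; call the resulting diffusion $\tilde X^{h*}$, realized on the common probability space so that its piecewise-constant interpolation $X^{h*}(\cdot)$ satisfies $X^{h*}(s)=\tilde X^{h*}_{n_s h}$, where $n_s\df\max\{n:nh\le s\}$, and the control used on $[n_s h,(n_s+1)h)$ is $v^{h*}(X^{h*}(s))$. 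Thus
\[
\cJ_\alpha^{v^{h*}}(x,c)=\Exp_x\Bigl[\int_0^\infty e^{-\alpha s}c\bigl(\tilde X^{h*}_s,v^{h*}(X^{h*}(s))\bigr)\,\D s\Bigr],
\]
while $\cJ_\alpha^{v^*}(x,c)=V_\alpha(x)$ for a.e.\ $x$ because $v^*$ is a minimizing selector of the HJB equation \cref{OptDHJB} (Theorem~\ref{TD1.1}), hence $\alpha$-discounted optimal.

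\textbf{Comparison step.} From the first estimate in the proof of Theorem~\ref{TD1.2} (which merely replaces $e^{-\alpha n_s h}$ by $e^{-\alpha s}$), one has, with an $\order(h)$ term that is uniform in $x$,
\[
\cJ_{\alpha,h}^{v^{h*}}(x,c)=\Exp_x\Bigl[\int_0^\infty e^{-\alpha s}c\bigl(X^{h*}(s),v^{h*}(X^{h*}(s))\bigr)\,\D s\Bigr]+\order(h);
\]
see \cref{E2.1APMA}. Subtracting, the two integrands carry the \emph{same} control argument $v^{h*}(X^{h*}(s))$, so their difference is $c(\tilde X^{h*}_s,\cdot)-c(X^{h*}(s),\cdot)$, and $\Exp_x[\abs{\tilde X^{h*}_s-X^{h*}(s)}^2]\le 2\bigl(\norm{b}_\infty^2 h+\norm{\upsigma}_\infty^2\bigr)h$ uniformly in $s$ (this is precisely the estimate of Step~3 in the proof of Theorem~\ref{TConvChain}). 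Since $c$ is only \emph{locally} Lipschitz in its first variable (uniformly in the second), I would truncate: on $\{\abs{\tilde X^{h*}_s}\vee\abs{X^{h*}(s)}\le R\}$ apply the local Lipschitz constant $L_R$ together with Cauchy--Schwarz and $\int_0^\infty e^{-\alpha s}\D s=\alpha^{-1}$; on the complement bound the difference by $2\norm{c}_\infty$ and use Chebyshev with the elementary (polynomially growing, hence $e^{-\alpha s}$-integrable, and $h$-independent) bound $\Exp_x[\abs{\tilde X^{h*}_s}^2]\le 3\abs{x}^2+3\norm{b}_\infty^2 s^2+3\norm{\upsigma}_\infty^2 s$. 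Sending $R\to\infty$ first and then $h\to0$ yields $\bigl|\cJ_\alpha^{v^{h*}}(x,c)-\cJ_{\alpha,h}^{v^{h*}}(x,c)\bigr|\to0$ for each fixed $x$.

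\textbf{Conclusion.} Since $v^{h*}$ is optimal for the discrete model, $\cJ_{\alpha,h}^{v^{h*}}(x,c)=\cJ_{\alpha,h}^{*}(x,c)\to V_\alpha(x)$ by Theorem~\ref{TD1.2}; combining this with the comparison step and $\cJ_\alpha^{v^*}(x,c)=V_\alpha(x)$ from the first paragraph gives $\lim_{h\to0}\cJ_\alpha^{v^{h*}}(x,c)=V_\alpha(x)=\cJ_\alpha^{v^*}(x,c)$ for a.e.\ $x\in\Rd$. No subsequence extraction is needed, because the limit in Theorem~\ref{TD1.2} does not depend on the chosen weakly convergent subsequence, and the a.e.\ qualifier is simply inherited from the a.e.-defined selectors $v^*$ and $v^{h*}$ (which is harmless since the nondegenerate diffusion has a transition density and never charges null sets). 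I expect the main obstacle to be the comparison step, specifically handling that the running cost is only assumed bounded and \emph{locally} Lipschitz in the state rather than globally Lipschitz as in PDE-based treatments: the $L^2$ state-difference estimate must be coupled with the truncation argument above, exploiting the discount factor together with the moment growth of the sampled diffusion, instead of a naive global Lipschitz bound.
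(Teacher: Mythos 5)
Your proposal is correct in substance, but it takes a genuinely different route from the paper for the one nontrivial step. The paper also starts from the triangle inequality through $V_\alpha^h(x)=\cJ_{\alpha,h}^*(x,c)$ and Theorem~\ref{TD1.2}, but it disposes of the remaining term by asserting, via \cite[Lemma~2.3.8]{ABG-book}, that the solution of \cref{E1.1} under $v^{h*}(\cdot)$ converges weakly to the solution under the limit control $U^*$ produced in the proof of Theorem~\ref{TD1.2}, so that $\cJ_\alpha^{v^{h*}}(x,c)$ and $V_\alpha^h(x)$ converge to the common value $\cJ_\alpha^{U^*}(x,c)=V_\alpha(x)$. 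You instead bypass any weak-convergence argument and any limit object: you compare $\cJ_\alpha^{v^{h*}}(x,c)$ directly with $\cJ_{\alpha,h}^{v^{h*}}(x,c)$ using the fact that the zero-order-hold trajectory $\tilde X^{h*}_s$ and its piecewise-constant interpolation $X^{h*}(s)$ carry the \emph{same} control argument and differ in $L^2$ by $\order(\sqrt h)$ uniformly in $s$, coupling this with a truncation to handle the merely locally Lipschitz cost. This buys a self-contained, quantitative estimate (and avoids the subsequence bookkeeping around $U^*$), at the price of committing to the sampled (zero-order-hold) reading of $\cJ_\alpha^{v^{h*}}$; the paper's notation is ambiguous between that reading and the continuous feedback $U_t=v^{h*}(X_t)$, and its own proof (which speaks of the solution ``under $v^{h*}(\cdot)$'' converging to the solution under the non-stationary $U^*$) is most naturally read in the same sampled sense you adopt, so this is not a defect of your argument. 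Two small points: the order of limits in your truncation step should be stated as ``the tail term is small uniformly in $h$ once $R$ is large, then send $h\to0$ for that fixed $R$'' (sending $R\to\infty$ first for fixed $h$ would let $L_R\sqrt h$ blow up); and the tail bound requires the second-moment growth of $\tilde X^{h*}_s$ to be integrable against $e^{-\alpha s}$, which your polynomial bound indeed provides.
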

\begin{proof} By triangle-inequality for each $x\in\Rd$, we have
\begin{align*}
|\cJ_{\alpha}^{v^{h*}}(x, c) - \cJ_{\alpha}^{v^*}(x, c)| \leq |\cJ_{\alpha}^{v^{h*}}(x, c) - V_{\alpha}^{h}(x)| + |V_{\alpha}^{h}(x) - \cJ_{\alpha}^{v^*}(x, c)|
\end{align*} From Theorem~\ref{TD1.2} we have $|V_{\alpha}^{h}(x) - \cJ_{\alpha}^{v^*}(x, c)|\to 0$ as $h\to 0$\,. In view of \cite[Lemma~2.3.8]{ABG-book}, we have solution of (\ref{E1.1}) under $v^{h*}(\cdot)$ converges weakly to the solution of (\ref{E1.1}) under $U^{*}$. This implies that $\cJ_{\alpha}^{v^{h*}}(x, c) \to \cJ_{\alpha}^{U^{*}}(x, c)$\,. Thus, it is easy to see that $|\cJ_{\alpha}^{v^{h*}}(x, c) - V_{\alpha}^{h}(x)|\to 0$ as $h\to 0$ (since both of them converge to the same limit $\cJ_{\alpha}^{U^{*}}(x, c)$)\,. This completes the proof. 
\end{proof}
%%%%%%%%%%%%%%%%%%%%%%%%%%%%%%%%%%%%%%%$$$$$$$$$$$$$$$$$$$$$$$$$$$$
\section{Analysis of Ergodic Cost}
In this section we study the near-optimality problem for the ergodic cost criterion. The ergodic optimal control problem is studied extensively in the literature see e.g., \cite{ABG-book} and references therein. 

For this cost evolution criterion we will study the near-optimality problem under the following Lyapunov stability assumption:
\begin{itemize}
\item[\hypertarget{A3}{{(A3)}}] There exist positive constants $C_0, C_1$, a compact set $\sK$ and an inf-compact function $\Lyap\in \cC^{2}(\Rd)$ (i.e., the sub-level sets $\{\Lyap\leq k\}$ are compact or empty sets in $\Rd$\,, for each $k\in\RR$) such that for all $(x,u)\in \Rd\times \Act$, we have 
\begin{align}\label{Lyap1}
\sL_{u}\Lyap(x) \leq C_{0} I_{\{\sK\}}(x) -  C_1\Lyap(x)\,.
\end{align}
\end{itemize}
%%%%%%%%%%%%%%%%%%%%%%%%%%%%%%%%%%%%%%%%%%%%%%%%%%%%%%%%%%%%%%
Thus, in view of (\ref{Lyap1}), from \cite[Lemma~2.5.5]{ABG-book}, it is easy to see that
\begin{equation}\label{ELyap1A}
\Exp_{x}^{U}\left[\Lyap(X_t)\right] \leq \frac{C_{0}}{C_1} + \Lyap(x) e^{- C_{1} t}
\end{equation} for all $t\geq 0$, $x\in \Rd$ and $U\in \Uadm$\,.
Now, combining \cite[Theorem~3.7.11]{ABG-book} and \cite[Theorem~3.7.12]{ABG-book}, we have the following complete characterization of the ergodic optimal control.
\begin{theorem}\label{TErgoOpt1}
Suppose that Assumptions \hyperlink{A1}{{(A1)}}--\hyperlink{A3}{{(A3)}} hold. Then the ergodic HJB equation 
\begin{equation}\label{EErgoOpt1A}
\rho = \min_{\zeta\in\Act}\left[\sL_{\zeta}V^*(x) + c(x, \zeta)\right]
\end{equation} admits a unique solution $(V^*, \rho)\in \cC^2(\Rd)\cap \sorder(\Lyap)\times \RR$ satisfying $V^*(0) = 0$\,.
Moreover, we have
\begin{itemize}
\item[(i)]$\rho = \sE^*(x_0)$
\item[(ii)] a stationary Markov control $v^*\in \Usm$ is an optimal control (i.e., $\sE(x_0, v^*) = \sE^*(x_0)$) if and only if it satisfies
\begin{align}\label{EErgoOpt1B}
&\trace\bigl(a(x)\grad^2 V^*(x)\bigr) + b(x,v^*(x))\cdot \grad V^*(x) + c(x, v^*(x))\nonumber\\ 
&\,=\,\min_{\zeta \in\Act}\left[\sL_{\zeta}V^*(x) + c(x, \zeta)\right]\,,\quad \text{a.e.}\,\, x\in\Rd\,.
\end{align}
\end{itemize} 
\end{theorem}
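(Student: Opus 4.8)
The plan is to obtain this statement from the vanishing‑discount machinery for this class of controlled diffusions developed in \cite[Chapter~3]{ABG-book}: under \hyperlink{A1}{{(A1)}}--\hyperlink{A3}{{(A3)}} the hypotheses of \cite[Theorems~3.7.11 and~3.7.12]{ABG-book} are in force, so one legitimate route is simply to verify this reduction. For completeness I would present the self‑contained argument, whose skeleton is as follows.

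First I would construct $(V^*,\rho)$ by letting the discount vanish. Starting from the discounted value functions $V_\alpha$ characterised in Theorem~\ref{TD1.1}, boundedness of $c$ gives $0\le\alpha V_\alpha(x)\le\norm{c}_\infty$, so $\alpha V_\alpha(0)$ has a limit point $\rho$ along some $\alpha_n\downarrow 0$. Setting $\bar V_\alpha\df V_\alpha-V_\alpha(0)$, the crucial estimate is a uniform‑in‑$\alpha$ bound for $\bar V_\alpha$ on balls: this follows by coupling the Harnack inequality for the uniformly elliptic family $\{\sL_\zeta\}$ (from \hyperlink{A2}{{(A2)}}) with the Lyapunov bound \cref{ELyap1A}, which keeps the oscillation of $V_\alpha$ from blowing up as $\alpha\to0$. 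Interior $\Sob^{2,p}$ estimates then yield equicontinuity on compacts, so along a further subsequence $\bar V_{\alpha_n}\to V^*$ in $\cC^1_{\mathrm{loc}}(\Rd)$ and weakly in $\Sobl^{2,p}(\Rd)$, with $V^*(0)=0$ and $V^*\in\sorder(\Lyap)$ inherited from the Lyapunov structure; passing to the limit in \cref{OptDHJB} (the $\min$ over the compact $\Act$ is stable under this convergence since $b,c$ are continuous) gives \cref{EErgoOpt1A}, and the local Lipschitz regularity of $c$ in $x$ upgrades $V^*$ to $\cC^2(\Rd)$. Uniqueness in $\cC^2(\Rd)\cap\sorder(\Lyap)$ would then follow from the verification step below: it pins $\tilde\rho=\rho=\sE^*(x_0)$ for any other solution $(\tilde V,\tilde\rho)$, after which $V^*-\tilde V$ obeys a linear elliptic inequality forcing it to be constant, hence $0$ by the normalisation.

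Next I would identify $\rho$ and establish the dichotomy. For arbitrary $U\in\Uadm$, the It\^o--Krylov formula applied to $V^*(X_t)$ together with $\sL_\zeta V^*(x)+c(x,\zeta)\ge\rho$ (a consequence of \cref{EErgoOpt1A}) gives
\begin{equation*}
\Exp_x^U\!\left[\int_0^T c(X_s,U_s)\,\D s\right]\;\ge\;\rho T+V^*(x)-\Exp_x^U\!\left[V^*(X_T)\right]\,.
\end{equation*}
Dividing by $T$ and using $V^*\in\sorder(\Lyap)$ together with the uniform moment bound \cref{ELyap1A} to send $\tfrac1T\Exp_x^U[V^*(X_T)]\to0$ shows $\sE_x(c,U)\ge\rho$, whence $\sE^*(x_0)\ge\rho$. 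For the reverse inequality and part~(ii), take a measurable minimizing selector $v^*\in\Usm$ of \cref{EErgoOpt1A} (which exists by a standard selection theorem, the integrand being continuous on $\Rd\times\Act$); under \hyperlink{A3}{{(A3)}} the diffusion under $v^*$ is positive recurrent with a unique invariant probability $\mu_{v^*}$, and integrating the identity $\sL_{v^*}V^*+c(\cdot,v^*)=\rho$ against $\mu_{v^*}$ (the drift term integrates to $0$) yields $\int c(\cdot,v^*)\,\D\mu_{v^*}=\rho$, so $\sE_{x_0}(c,v^*)=\rho$ by the ergodic theorem; this gives $\rho=\sE^*(x_0)$ and optimality of $v^*$. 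Conversely, if some $v^*\in\Usm$ is optimal but \cref{EErgoOpt1B} fails on a Borel set of positive Lebesgue measure, then $\sL_{v^*}V^*+c(\cdot,v^*)>\rho$ there; since $\mu_{v^*}$ has a strictly positive density on $\Rd$ by \hyperlink{A2}{{(A2)}}, the same integration gives $\sE_{x_0}(c,v^*)>\rho=\sE^*(x_0)$, contradicting optimality — and the forward direction of~(ii) is exactly the computation just performed with $v^*$ in place of the selector.

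The main obstacle is the first step: securing the uniform‑in‑$\alpha$ local bound on $\bar V_\alpha$. One must apply the Harnack inequality for $\{\sL_\zeta\}$ on balls in a way that dovetails with the Lyapunov estimate so that the relevant constants do not degenerate as $\alpha\to0$, and then invoke the interior $\Sob^{2,p}$ theory to pass to the limit in the fully nonlinear equation; these are precisely the technical ingredients assembled in \cite[Sections~3.6--3.7]{ABG-book}, which is why the cleanest presentation here is to quote those theorems after checking that \hyperlink{A1}{{(A1)}}--\hyperlink{A3}{{(A3)}} supply their hypotheses.
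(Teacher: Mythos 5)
Your proposal is correct and takes essentially the same route as the paper: the paper gives no self-contained argument and simply invokes \cite[Theorems~3.7.11 and~3.7.12]{ABG-book} after noting that \hyperlink{A1}{(A1)}--\hyperlink{A3}{(A3)} place the model within that framework, which is exactly your primary plan. Your additional sketch of the vanishing-discount construction and the verification/converse arguments accurately reflects how those cited theorems are themselves established, so it adds detail rather than diverging from the paper's approach.
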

Also, in view of (\ref{Lyap1}) it is easy to see that, under any $v\in\Usm$ the solution of (\ref{E1.1}) is positive recurrent (see \cite[Lemma~2.5.5]{ABG-book})\,. Let $\mu_{v}$ denote the associated unique invariant measure of (\ref{E1.1}). Since the diffusion is nondegenerate, we have that the compact sets are petite. Thus, from \cite[Theorem~6.1]{meyn1993stability} (also see \cite{DMT95}), for any $f\in \cC_{BL}(\Rd)$, we have that 
\begin{equation}\label{EconvgInv1A}
\lim_{t\to\infty}|\int_{\Rd}f(y) P^t(x, v, dy) - \int_{\Rd}f(y)\mu_{v}(dy)| = 0\,,   
\end{equation} where $P^t(x, v, \cdot)$ is the transition function of the solution of (\ref{E1.1}) under $v\in \Usm$\,. 

Now, by It\^{o}-Krylov formula, from (\ref{Lyap1}) we deduce that
\begin{align*}
&\Exp_{x}^{U}\left[e^{C_1 h}\Lyap(X_{k+1}^h)| X_k^h = x\right] - \Lyap(x) \nonumber\\
=&\Exp_{x}^{U}\left[\int_0^{h} e^{C_1 s}\big(C_1 \Lyap(X_s) + \sL\Lyap(X_s)\big) ds| X_k^h = x\right]\nonumber\\
\leq & C_0 \Exp_{x}^{U}\left[\int_0^{h} e^{C_1 s} I_{\{\sK\}}(X_s)) ds| X_k^h = x\right]\,.
\end{align*} This implies
\begin{align}\label{EconvgInv1B}
\Exp_{x}^{U}&\left[\Lyap(X_{k+1}^h)| X_k^h = x\right]\nonumber\\
\leq & e^{- C_1 h}\Lyap(x) + \hat{C}_0 = \Lyap(x) - (1-e^{-C_1 h})\Lyap(x)  + \hat{C}_0
\end{align} for some positive constant $\hat{C}_0$\,. Let $\epsilon \leq 1- e^{-C_1 h}$\,. Hence from (\ref{EconvgInv1B}), we get
\begin{align*}
&\Exp_{x}^{U}\left[\Lyap(X_{k+1}^h)| X_k^h = x\right]\nonumber\\
&\leq \Lyap(x) - \epsilon \Lyap(x) - (1-\epsilon -e^{-C_1 h})\Lyap(x)  + \hat{C}_0
\end{align*}
Let $\hat{\sK}\df \{x\in\Rd: \Lyap(x) \leq \frac{\hat{C}_0}{(1-\epsilon -e^{-C_1 h})}\}$. Since $\Lyap(x)$ is an inf-compact function, the set $\hat{\sK}$ is a compact set\,. This gives us the following.
\begin{align}\label{EconvgInv1BA}
\Exp_{x}^{U}\left[\Lyap(X_{k+1}^h)| X_k^h = x\right] \leq & (1 - \epsilon) \Lyap(x) + \hat{C}_0 I_{\{\hat{\sK}\}}(x)\,.
\end{align}

Thus, from \cite[Theorem~6.3]{Meyn}, for each $h > 0$ and $f\in \cC_{BL}(\Rd)$, we also have 
\begin{equation}\label{EconvgInv1ADiscrete}
\lim_{n\to\infty} |\int_{\Rd} f(y)P^{h,n}(x, v^h, dy) - \int_{\Rd} f(y)\mu_{v}^h(dy)| = 0\,,   
\end{equation} where $P^{h,n}(x, v, \cdot)$ is the transition function of the discrete-time approximating process $X^h$ of (\ref{E1.1}) under $v\in \Usm$ and $\mu_{v}^h$ is the associated invariant measure of the process $X^h$\,.

The following theorem shows that under Lipschitz continuous policies the invariant measure of the discrete-time approximating model converges to the invariant measure of the associated continuous-time model as the parameter of the discretization approaches to zero. 
\begin{theorem}\label{TApproxInvA}
Suppose Assumptions \hyperlink{A1}{{(A1)}}--\hyperlink{A3}{{(A3)}} hold. Then for any Lipschitz continuous policy $v\in \Usm$ we have 
\begin{equation}\label{EApproxInvAB}
\lim_{h\to 0} |\int_{\Rd} f(y)\mu_{v}(dy) - \int_{\Rd} f(y)\mu_{v}^h(dy)| = 0\,,    
\end{equation} for any $f\in \cC_{BL}(\Rd)$\,.
\end{theorem}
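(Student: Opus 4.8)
The plan is to obtain \cref{EApproxInvAB} from two ingredients: (a) tightness of the family $\{\mu_v^h\}_{h>0}$ of invariant measures of the approximating chains, extracted uniformly in $h$ from the Lyapunov condition \hyperlink{A3}{(A3)}; and (b) the identification of every subsequential weak limit of $\mu_v^h$ as an invariant probability measure of the controlled diffusion \cref{E1.1} under $v$, which by uniqueness of that measure must be $\mu_v$. Since $\cC_{BL}(\Rd)\subset\cC_b(\Rd)$, weak convergence $\mu_v^h\to\mu_v$ is precisely \cref{EApproxInvAB}.

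\textbf{Step 1 (uniform Lyapunov bound and tightness).} The chain under $v$ is positive recurrent (this is the content of the geometric drift inequality \cref{EconvgInv1BA}), so $\mu_v^h$ integrates $\Lyap$. Integrating \cref{EconvgInv1B} against the invariant measure $\mu_v^h$ gives $\bigl(1-e^{-C_1 h}\bigr)\int_{\Rd}\Lyap\,\D\mu_v^h\le\hat C_0$. Inspecting the derivation of \cref{EconvgInv1B} one sees $\hat C_0=\order(h)$, while $1-e^{-C_1 h}=C_1h+\order(h^2)$, so the ratio $\hat C_0/(1-e^{-C_1h})$ remains bounded (in fact tends to $C_0/C_1$) as $h\downarrow0$. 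Hence there are $h_0>0$ and $M<\infty$ with $\int_{\Rd}\Lyap\,\D\mu_v^h\le M$ for all $h\le h_0$; since $\Lyap$ is inf-compact, Markov's inequality yields $\mu_v^h(\{\Lyap>R\})\le M/R$ uniformly in $h\le h_0$, so $\{\mu_v^h\}_{0<h\le h_0}$ is tight in $\cP(\Rd)$.

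\textbf{Step 2 (identifying the limit).} Fix $h_n\downarrow0$ and, by Step 1, pass to a subsequence along which $\mu_v^{h_n}\to\bar\mu$ weakly, $\bar\mu$ a probability measure (no mass escapes). Fix $f\in\cC_{\mathrm c}^{\infty}(\Rd)$. Exactly as in Step 2 of the proof of Theorem~\ref{TConvChain} (now with the control frozen at $v(x)$ over the step $[0,h]$, cf.\ Remark~\ref{R1}), It\^o's formula gives
\[
\int_{\Rd}f(y)\,\cT_h(\D y\,|\,x,v(x))-f(x)=\Exp_x^{v}\!\left[\int_0^{h}\Bigl(\trace\bigl(a(\tilde X_s^{h})\grad^2 f(\tilde X_s^{h})\bigr)+b(\tilde X_s^{h},v(x))\cdot\grad f(\tilde X_s^{h})\Bigr)\D s\right],
\]
with $\tilde X^{h}$ the solution of \cref{E1.1} under $v$ started at $x$. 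Put $\psi_h(x):=h^{-1}\bigl(\int_{\Rd}f(y)\cT_h(\D y\,|\,x,v(x))-f(x)\bigr)$. Using the short-time bound $\Exp_x^{v}[|\tilde X_s^{h}-x|^2]\le Cs$ (as in \cref{EtightA}) together with the boundedness and Lipschitz continuity of $a,b,\grad f,\grad^2 f$ and the compact support of $f$, one checks that $\psi_h\to\sL_v f$ \emph{uniformly} on $\Rd$ and that $\sup_{h\le h_0}\|\psi_h\|_\infty<\infty$. Invariance of $\mu_v^h$ means $\int_{\Rd}\psi_h\,\D\mu_v^h=0$ for each $h$; combining the uniform convergence $\psi_h\to\sL_v f$ with the weak convergence $\mu_v^{h_n}\to\bar\mu$ (and $\sL_v f\in\cC_b(\Rd)$) yields $\int_{\Rd}\sL_v f\,\D\bar\mu=0$ for all $f\in\cC_{\mathrm c}^{\infty}(\Rd)$. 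Since the martingale problem for the nondegenerate operator $\sL_v$ is well posed under \hyperlink{A1}{(A1)}--\hyperlink{A2}{(A2)}, infinitesimal invariance forces invariance (see \cite{ABG-book}), so $\bar\mu$ is an invariant probability measure of \cref{E1.1} under $v$.

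\textbf{Step 3 (conclusion) and the main difficulty.} By positive recurrence the diffusion under $v$ has a unique invariant probability measure $\mu_v$, so $\bar\mu=\mu_v$; as the subsequence was arbitrary, $\mu_v^h\to\mu_v$ weakly as $h\to0$, which is \cref{EApproxInvAB}. (Alternatively one could start the chains from $\mu_v^h$ and run the tightness/martingale argument of Theorem~\ref{TConvChainA1} directly on the resulting stationary interpolated processes, whose one-dimensional marginal is then forced to be an invariant measure of the diffusion; this stays closer to the weak-convergence method used elsewhere in the paper.) I expect the crux to be Step 2: one must interchange $h\to0$ with integration against the $h$-dependent measures $\mu_v^h$, and the device that makes this work is the passage from \emph{pointwise} to \emph{uniform} convergence of $\psi_h$ to $\sL_v f$ — supplied by the compact support of $f$ and the uniform short-time second-moment estimate for the diffusion increments. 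A minor but real point, handled in Step 1, is that $\hat C_0$ and $1-e^{-C_1h}$ both vanish as $h\downarrow0$, so the uniform Lyapunov bound must be read off from their ratio rather than from either alone.
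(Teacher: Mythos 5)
Your proof is correct, but it takes a genuinely different route from the paper's. The paper argues through finite-time marginals: it fixes a horizon $T^*$ at which both the continuous-time kernel $P^{Nh}(x,v,\cdot)$ and the chain's $N$-step kernel $P^{h,N}(x,v^h,\cdot)$ are within $\epsilon/3$ of their respective invariant measures (via \cref{EconvgInv1A} and \cref{EconvgInv1ADiscrete}), and then closes the triangle inequality with a pathwise strong-approximation estimate --- driving the diffusion and the sampled chain by the same Brownian motion, a discrete Gr\"onwall argument yields $\Exp\bigl[|X_{Nh}-X_N^h|^2\bigr]^{1/2}=\order(\sqrt h)$ on that fixed horizon. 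Your argument never compares trajectories: you extract tightness of $\{\mu_v^h\}$ from the uniform bound $\int_{\Rd}\Lyap\,\D\mu_v^h\le C_0/C_1$ (the same bound the paper uses in the proof of Theorem~\ref{TContValue1A}), identify any subsequential weak limit through the rescaled invariance identity $\int_{\Rd}\psi_h\,\D\mu_v^h=0$ with $\psi_h\to\sL_vf$ uniformly, and then invoke the Echeverr\'ia-type characterization of invariant measures together with uniqueness of $\mu_v$ --- exactly the characterization (\cite[Theorem~2.6.16]{ABG-book}) that the paper itself invokes in the proof of Theorem~\ref{TContValue1A}. What your route buys: it uses only the invariance equation at each fixed $h$ and no rate of convergence to equilibrium, so it does not require the ergodicity of the chains to be uniform in $h$ (a uniformity implicitly presupposed by the paper's choice of a single $T_2^*$ valid for all small $h$ in \cref{EApproxInvAD}). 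What the paper's route buys: an explicit $\order(\sqrt h)$ bound on the finite-horizon marginal error, with no appeal to the generator characterization of invariance. Two small points you should make explicit: the a priori integrability $\mu_v^h(\Lyap)<\infty$ needed to integrate the drift inequality against $\mu_v^h$ should be justified by the Foster--Lyapunov theory or by truncating $\Lyap$; and the passage from $\int\psi_{h_n}\,\D\mu_v^{h_n}=0$ to $\int\sL_vf\,\D\bar\mu=0$ uses that $x\mapsto b(x,v(x))\cdot\grad f(x)$ is bounded and continuous, which is precisely where the continuity of $v$ enters your argument.
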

\begin{proof} Let $f\in \cC_{BL}(\Rd)$. Now from (\ref{EconvgInv1A}) and (\ref{EconvgInv1ADiscrete}) it is easy to see that for any $\epsilon > 0$ there exists $T_1^*, T_2^*$ (respectively) large enough such that
\begin{equation}\label{EApproxInvAC}
|\int_{\Rd} f(y)P^{Nh}(x, v, dy) - \int_{\Rd} f(y)\mu_{v}(dy)| < \frac{\epsilon}{3},   
\end{equation}  for all $Nh > T_1^*$, and
\begin{equation}\label{EApproxInvAD}
|\int_{\Rd} f(y)P^{h,N}(x, v^h, dy) - \int_{\Rd} f(y)\mu_{v}^h(dy)| < \frac{\epsilon}{3}\,,    
\end{equation} for all $Nh > T_2^*$. Now
\begin{align}\label{EApproxInvAE}
&|\int_{\Rd} f(y)P^{Nh}(x, v, dy) - \int_{\Rd} f(y) P^h_N(x, v^h, dy)| \nonumber\\
&=  E\left[ |f(X_{Nh}) - f(X^h_N)|\right] \nonumber\\
& \leq \|f\|_{BL}E\left[ |X_{Nh} -X^h_N|\right] \leq \|f\|_{BL}\Exp\left[ |X_{Nh} -X^h_N|^2\right]^{\frac{1}{2}}
\end{align}
Let $v^h(\tilde{X}^h_s) \df v(\tilde{X}^h_{n})$ for $s\in [nh, (n+1) h)$\,. Since $X^h(s) = X^h_{n}$ for $s\in [nh, (n+1)h)$, we have 
\begin{align*}
&X_{nh} - X^h_n \nonumber\\
=&  X(0) - X^h_0 + \int_{0}^{nh}\left( b(X_s, v(X_s)) - b(\tilde{X}^h_s, v^h(\tilde{X}^h_s)) \right) ds \nonumber\\
&+ \int_{0}^{nh} \left( \sigma(X(s)) - \sigma(\tilde{X}_s^h\right)dW_s\,.    
\end{align*}
Thus
\begin{align}\label{EApproxInvAF}
&\|X_{nh} - X^h_n\|_2^2 \nonumber\\
\leq &  3\bigg(\|X_0 - X^h_0\|_2^2 + \|\int_{0}^{nh} \left( \upsigma(X_s) - \upsigma(\tilde{X}_s^h)\right)dW_s\|_2^2\nonumber\\
&+ \|\int_{0}^{nh}\left( b(X_s, v(X_s)) - b(\tilde{X}_s^h, v^h(\tilde{X}_s^h)) \right) ds\|_2^2\bigg)\nonumber\\
\leq &  3\bigg(\|X_0 - X^h_0\|_2^2 + \int_{0}^{nh} \Exp\left( |\upsigma(X_s) - \upsigma(\tilde{X}_s^h)|^2\right)ds\nonumber\\
&+ nh\int_{0}^{nh} \Exp\left( |b(X_s, v(X_s)) - b(\tilde{X}_s^h, v^h(\tilde{X}_s^h))|^2 \right) ds \bigg)\nonumber\\
\leq &  3\bigg(\|X_0 - X^h_0\|_2^2 + \hat{C}_2 (nh +1)\int_{0}^{nh} \Exp\left( |X_s - \tilde{X}_s^h|^2 \right) ds\bigg)\,,
\end{align} for some constant $\hat{C}_2$ which depends on the $C_0$ (see Assumption~\hyperlink{A1}{{(A1)}})\,. Suppose $s\in [(k-1)h, kh)$, then
\begin{align}\label{EApproxInvAG}
\|X_s - \tilde{X}_s^h\|_2^2 
\leq \|X_s - X_{(k-1)h}\|_2^2 + \|X_{(k-1)h} - X^h_{k-1}\|_2^2 + \|X^h_{k-1} -\tilde{X}_s^h\|_2^2\,.    
\end{align}
Since $b, \upsigma$ are bounded, we have
\begin{align}\label{EApproxInvAH}
\|X_s - X^h_{k-1}\|_2^2
\leq & 2\Exp\left(|\int_{(k-1)h}^{s}b(X_r, v(X_r))dr|^2\right) + 2\Exp\left(|\int_{(k-)h}^{s}\upsigma(X_r)dW_r|^2\right)\nonumber\\
\leq & 2(\|b\|_{\infty}h + \|\upsigma\|_{\infty})h
\end{align}
Similarly, we have $\|X^h_{k-1} -\tilde{X}_s^h\|_2^2 \leq 2(\|b\|_{\infty}h + \|\upsigma\|_{\infty})h$\,.

If $\|X(0) - X^h_0\|_2^2 \leq \hat{C}h$ for some positive constant $\hat{C}$. Let $T_3^* = \max\{T_1^*, T_2^*\} + 2$. Then for any $Nh$ such that $T_3^* \geq Nh \geq T_3^* - 1$, from (\ref{EApproxInvAF}) - (\ref{EApproxInvAH}) it follows that
\begin{align}\label{EApproxInvAI}
\|X_{nh} - X^h_n\|_2^2 
\leq & 3\hat{C}h + 12\hat{C}_2(nh + 1)(\|b\|_{\infty}h + \|\upsigma\|_{\infty})nh + \hat{C}_2(nh + 1)h\sum_{k=1}^{n} \|X_{(k-1)h} - X^h_{k-1}\|_2^2\nonumber\\
\leq & \hat{C}_4h + \hat{C}_2(T_3^* +1) h \sum_{k=1}^{n} \|X_{(k-1)h} - X^h_{k-1}\|_2^2\,,
\end{align} for some positive constant $\hat{C}_4$ (depending on $T_3^*$)\,. Let $Z_n = \max_{0\leq k\leq n}\|X_{(k-1)h} - X^h_{k-1}\|_2^2$. Thus (\ref{EApproxInvAI}) implies that
\begin{align*}
Z_n \leq \hat{C}_4 h + \hat{C}_2(T_3^* +1)h \sum_{k=1}^{n} Z_k\,.
\end{align*} Therefore, by the discrete Gr$\ddot{o}$nwall lemma, we deduce that 
\begin{align}\label{EApproxInvAI1}
Z_N \leq \hat{C}_4 h e^{\hat{C}_2(T_3^* +1)Nh} \leq \hat{C}_4 h e^{\hat{C}_2(T_3^* +1)T_3^*}
\end{align}
Now, from (\ref{EApproxInvAE}) and (\ref{EApproxInvAI1}), for some positive constant $\hat{C}_5$ it follows that
\begin{align}\label{EApproxInvAJ}
&|\int_{\Rd} f(y)P^{Nh}(x, v, dy) - \int_{\Rd} f(y) P^{h,N}(x, v^h, dy)| < \hat{C}_5\sqrt{h}
\end{align}
Hence, there exist $\delta > 0$ (small enough) such that for $|h|< \delta$, we get
\begin{align}\label{EApproxInvAK}
&|\int_{\Rd} f(y)P^{Nh}(x, v, dy) - \int_{\Rd} f(y) P^{h,N}(x, v^h, dy)| < \frac{\epsilon}{3}.
\end{align} Now combining (\ref{EApproxInvAC}), (\ref{EApproxInvAD}) and (\ref{EApproxInvAK}) we obtain the result\,.
\end{proof}
Let $L_{\Lyap}^{\infty} := \{f: \sup_{x\in \Rd}\frac{|f(x)|}{\Lyap} < \infty\}$\,. Note that under Assumption~(A2), we have (\ref{EconvgInv1B}) holds for each $h>0$\,. Thus, for the discrete-time control problem (\ref{MDP1_cost}), from \cite[Theorem~3.4]{HL-97} we have the following existence result.
\begin{theorem}\label{TErgoOptMDP1}
Suppose that Assumptions \hyperlink{A1}{{(A1)}}--\hyperlink{A3}{{(A3)}} hold. Then the average cost optimality equation 
\begin{align}\label{EErgoOptMDP1A}
\rho^h = \min_{\zeta \in\Act}\left[\int_{\Rd} \psi^{h*}(y)P^{h}(x,\zeta, dy) + c_h(x, \zeta)\right]
\end{align} admits a unique solution $(\psi^{h*}, \rho^h)\in L_{\Lyap}^{\infty}(\Rd)\times \RR$ satisfying $\psi^{h*}(0) = 0$\,.
Moreover, we have
\begin{itemize}
\item[(i)]$\rho^h = \sE_h^*(x_0)$
\item[(ii)] any stationary Markov control $v^{h*}\in \Udsm^h$ which satisfies 
\begin{align}\label{EErgoOptMDP1B}
&\min_{\zeta \in\Act}\left[\int_{\Rd} \psi^{h*}(y)P^{h}(x,\zeta, dy) + c_h(x, \zeta)\right]\nonumber\\ 
&\,=\, \left[\int_{\Rd} \psi^{h*}(y)P^{h}(x,v^{h*}(x), dy) + c_h(x, v^{h*}(x))\right]\,,
\end{align} for almost every $x\in\Rd$, is an optimal control (i.e., $\sE_h(x_0, v^{h*}) = \sE_h^*(x_0)$)\,.
\end{itemize} 
\end{theorem}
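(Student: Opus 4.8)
The plan is to deduce the statement from \cite[Theorem~3.4]{HL-97} by checking that the discrete-time controlled Markov chain with action space $\Act$, transition kernel $\cT_h$, and one-stage cost $c_h$ satisfies the hypotheses of that result. Concretely, four facts must be verified: (i)~$\Act$ is a compact metric space, which holds by assumption; (ii)~$c_h$ is nonnegative, jointly continuous on $\Rd\times\Act$ and $\Lyap$-bounded, i.e. $\sup_{(x,\zeta)}c_h(x,\zeta)/\Lyap(x)<\infty$; (iii)~the kernel is continuous in the $\Lyap$-weighted sense, meaning $(x,\zeta)\mapsto\int_{\Rd}g(y)\,\cT_h(\D y\mid x,\zeta)$ is continuous for every continuous $g\in L_{\Lyap}^{\infty}(\Rd)$, in particular for $g=\Lyap$; and (iv)~a uniform geometric drift inequality together with irreducibility and petiteness of compact sets for $\{X_k^h\}$ under every stationary policy.

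Item (ii) is immediate, since $c_h=h\,c$ with $c$ bounded and jointly continuous, and, $\Lyap$ being inf-compact, we may replace $\Lyap$ by $\Lyap+1$ (which preserves \cref{Lyap1}) to assume $\Lyap\ge 1$, so $c_h/\Lyap$ is bounded. Item (iv) has essentially already been recorded: the drift inequality $\Exp_x^{U}[\Lyap(X_{k+1}^h)\mid X_k^h=x]\le(1-\epsilon)\Lyap(x)+\hat C_0 I_{\hat\sK}(x)$ with $\hat\sK$ compact was derived in \cref{EconvgInv1BA}, and a glance at that derivation shows the constants $\epsilon,\hat C_0,\hat\sK$ do not depend on the control, hence the bound is uniform in $\zeta\in\Act$; moreover nondegeneracy of $a=\frac{1}{2}\upsigma\upsigma\transp$ (Assumption~\hyperlink{A2}{{(A2)}}) supplies a one-step density component that is positive on compacts, yielding irreducibility with respect to Lebesgue measure and petiteness of compacts for $\{X_k^h\}$ --- exactly the properties already used to obtain \cref{EconvgInv1ADiscrete} via \cite[Theorem~6.3]{Meyn}.

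The substantive point is item (iii). Fixing $(x_n,\zeta_n)\to(x,\zeta)$ and running \cref{sampled_MDP} over a single period $[0,h]$ with the control frozen at $\zeta_n$ and initial condition $x_n$, the Lipschitz bounds of Assumption~\hyperlink{A1}{{(A1)}} and a Gr\"onwall argument give $\Exp\big[|X_h^{x_n,\zeta_n}-X_h^{x,\zeta}|^2\big]\to 0$, hence $\cT_h(\cdot\mid x_n,\zeta_n)$ converges weakly to $\cT_h(\cdot\mid x,\zeta)$; this settles continuity against bounded continuous $g$. To upgrade to continuous $g\in L_{\Lyap}^{\infty}$ one needs the family $\{\Lyap(X_h^{x_n,\zeta_n})\}_n$ to be uniformly integrable: \cref{ELyap1A} applied over one period bounds $\Exp[\Lyap(X_h^{x_n,\zeta_n})]$ by a quantity continuous in $x_n$ (hence bounded along the convergent sequence), and repeating the Lyapunov computation with a higher power of $\Lyap$ --- legitimate under \hyperlink{A3}{{(A3)}} after enlarging constants --- yields a uniform bound on a higher moment, which gives the required uniform integrability and hence the weighted continuity.

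Granting (i)--(iv), \cite[Theorem~3.4]{HL-97} produces a solution $(\psi^{h*},\rho^h)\in L_{\Lyap}^{\infty}(\Rd)\times\RR$ of \cref{EErgoOptMDP1A}, unique under the normalization $\psi^{h*}(0)=0$, with $\rho^h=\sE_h^*(x_0)$ independent of $x_0$; the existence of a measurable minimizing selector $v^{h*}$ satisfying \cref{EErgoOptMDP1B} and its optimality are part of the same theorem, the selector being furnished by the Kuratowski--Ryll-Nardzewski theorem from compactness of $\Act$ and continuity in $\zeta$ of $\int_{\Rd}\psi^{h*}(y)P^{h}(x,\zeta,\D y)+c_h(x,\zeta)$ established in item (iii). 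I expect item (iii) --- the $\Lyap$-weighted continuity of $\cT_h$, i.e. combining weak convergence of the one-step laws with the necessary uniform integrability of the Lyapunov function --- to be the main obstacle.
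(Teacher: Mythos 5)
Your proposal takes exactly the route the paper does: the paper's entire argument for this theorem consists of verifying the control-uniform one-step geometric drift inequality \cref{EconvgInv1BA} (with $\hat{\sK}$ compact by inf-compactness of $\Lyap$) and then invoking \cite[Theorem~3.4]{HL-97}. The remaining hypotheses you check --- $\Lyap$-boundedness of $c_h$, the $\Lyap$-weighted continuity of $\cT_h$, petiteness of compacts, and measurable selection --- are left implicit in the paper, so you have if anything supplied more of the verification than the source, your item~(iii) being the only place where a careful reader would still want details (the claim that a higher power of $\Lyap$ again satisfies a drift inequality is not automatic from \hyperlink{A3}{(A3)} alone).
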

Next we show that as $h\to 0$, the optimal value $\sE_h^*(x_0)$ of the discretized model will converge to the optimal value $\sE^*(x_0)$ of the continuous-time model (\ref{E1.1})\,.
\begin{theorem}\label{TContValue1A}
Suppose that Assumptions \hyperlink{A1}{{(A1)}}--\hyperlink{A3}{{(A3)}} hold. Then we have 
\begin{align}\label{EContValue1A}
\lim_{h\to 0} \sE_h^*(x_0) = \sE^*(x_0)\,.
\end{align}    
\end{theorem}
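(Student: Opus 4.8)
The plan is to establish the two inequalities $\limsup_{h\to0}\sE_h^*(x_0)\le\sE^*(x_0)$ and $\liminf_{h\to0}\sE_h^*(x_0)\ge\sE^*(x_0)$, recalling from Theorems~\ref{TErgoOpt1} and \ref{TErgoOptMDP1} that $\sE^*(x_0)=\rho$ and $\sE_h^*(x_0)=\rho^h$ are the respective ergodic values and are independent of the initial point.

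For the upper bound I would follow Step~2 of the proof of Theorem~\ref{TD1.2}. Fix $\epsilon>0$. Near optimality of Lipschitz stationary policies for the ergodic criterion (from \cite{PYNearsmoothSCL24}, building on \cite{pradhan2022near}) furnishes a Lipschitz $v^\epsilon\in\Udsm$ with $\sE_{x_0}(c,v^\epsilon)\le\sE^*(x_0)+\epsilon$; since the diffusion under $v^\epsilon$ is positive recurrent, $\sE_{x_0}(c,v^\epsilon)=\int_\Rd c(x,v^\epsilon(x))\,\mu_{v^\epsilon}(\D x)$. As $v^\epsilon\in\Udsm^h$ for every $h$ and the chain $X^h$ under $v^\epsilon$ is positive recurrent (cf.\ \cref{EconvgInv1ADiscrete}), the pointwise ergodic theorem and boundedness of $c$ give $\sE_h(x_0,v^\epsilon)=\int_\Rd c(x,v^\epsilon(x))\,\mu_{v^\epsilon}^h(\D x)$. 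Theorem~\ref{TApproxInvA} yields convergence of $\mu_{v^\epsilon}^h$ to $\mu_{v^\epsilon}$ against $\cC_{BL}(\Rd)$, while integrating \cref{EconvgInv1B} against $\mu_{v^\epsilon}^h$ gives an $h$-uniform bound $\int_\Rd\Lyap\,\D\mu_{v^\epsilon}^h\le C_0/C_1$ (inspecting the It\^{o} estimate preceding \cref{EconvgInv1B}, the constant $\hat C_0$ there is of order $1-e^{-C_1h}$, so the ratio stays bounded); hence $\{\mu_{v^\epsilon}^h\}_h$ is tight and $\mu_{v^\epsilon}^h\Rightarrow\mu_{v^\epsilon}$ weakly. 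Since $x\mapsto c(x,v^\epsilon(x))$ is bounded and continuous, $\sE_h(x_0,v^\epsilon)\to\sE_{x_0}(c,v^\epsilon)$, and from $\sE_h^*(x_0)\le\sE_h(x_0,v^\epsilon)$ we obtain, letting $h\to0$ and then $\epsilon\downarrow0$, $\limsup_{h\to0}\sE_h^*(x_0)\le\sE^*(x_0)$.

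For the lower bound, take $v^{h*}\in\Udsm^h$ optimal for the chain (Theorem~\ref{TErgoOptMDP1}), let $\mu^h\df\mu_{v^{h*}}^h$ be its invariant measure and $\nu^h\in\cP(\Rd\times\Act)$ the ergodic occupation measure $\nu^h(\D x,\D\zeta)\df\mu^h(\D x)\,v^{h*}(x)(\D\zeta)$, so that $\sE_h^*(x_0)=\int_{\Rd\times\Act}c\,\D\nu^h$. Integrating \cref{EconvgInv1B} against $\mu^h$ gives $\sup_h\int_\Rd\Lyap\,\D\mu^h<\infty$; since $\Lyap$ is inf-compact and $\Act$ is compact, $\{\nu^h\}_h$ is tight, so along a subsequence $\sE_h^*(x_0)\to\liminf_{h\to0}\sE_h^*(x_0)$ and $\nu^h\Rightarrow\bar\nu$. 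The crux is to identify $\bar\nu$ as an ergodic occupation measure of \cref{E1.1}. For $g\in\cC_c^{\infty}(\Rd)$, invariance of $\mu^h$ reads $\int_\Rd\big(\int_\Rd g(y)\,\cT_h(\D y\,|\,x,v^{h*}(x))-g(x)\big)\mu^h(\D x)=0$, while It\^{o}'s formula applied to the sampled dynamics \cref{sampled_MDP} with control frozen at $v^{h*}(x)$ (i.e.\ to the diffusion started at $x$ and driven by the constant control $v^{h*}(x)$, which we denote $\tilde{X}$) gives $\frac{1}{h}\big(\int_\Rd g(y)\,\cT_h(\D y\,|\,x,v^{h*}(x))-g(x)\big)=\sL_{v^{h*}(x)}g(x)+r^h(x)$ with $\sup_x|r^h(x)|=\order(\sqrt h)$ (using boundedness of $a,b$, the compact support of $g$, Lipschitz continuity of $x\mapsto\sL_\zeta g(x)$ uniform in $\zeta$, and $\Exp[|\tilde{X}_s-x|]=\order(\sqrt s)$). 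Hence $\int_{\Rd\times\Act}\sL_\zeta g(x)\,\nu^h(\D x,\D\zeta)=\order(\sqrt h)$, and since $(x,\zeta)\mapsto\sL_\zeta g(x)=\trace(a(x)\grad^2g(x))+b(x,\zeta)\cdot\grad g(x)$ is bounded and continuous on $\Rd\times\Act$, letting $h\to0$ gives $\int_{\Rd\times\Act}\sL_\zeta g(x)\,\bar\nu(\D x,\D\zeta)=0$ for all $g\in\cC_c^{\infty}(\Rd)$. By Fatou the $\Rd$-marginal $\bar\mu$ of $\bar\nu$ satisfies $\int_\Rd\Lyap\,\D\bar\mu<\infty$, so by the convex-analytic characterization of the ergodic value \cite{ABG-book} (equivalently, by disintegrating $\bar\nu(\D x,\D\zeta)=\mu_{\bar v}(\D x)\,\bar v(x)(\D\zeta)$ with $\bar v\in\Usm$ a stable stationary control and $\mu_{\bar v}$ its invariant measure) one gets $\liminf_{h\to0}\sE_h^*(x_0)=\int_{\Rd\times\Act}c\,\D\bar\nu=\sE_{x_0}(c,\bar v)\ge\sE^*(x_0)$, again using that $c$ is bounded continuous for the passage $\nu^h\Rightarrow\bar\nu$. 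Combining the two bounds yields \cref{EContValue1A}.

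The step I expect to be the main obstacle is the lower bound, and within it the passage to the limit in the stationarity identity: one must control the one-step discrepancy between the kernel $\cT_h$ and the generator $\sL$ uniformly over the moving integration variable, keep track of the $h$-dependence of the constants in the Lyapunov bounds so that the occupation measures remain tight, and then verify that the weak limit $\bar\nu$ is a genuine ergodic occupation measure of the limiting diffusion. It is here that nondegeneracy \hyperlink{A2}{{(A2)}} (uniqueness of the invariant measure pinned down by $\int\sL_\zeta g\,\D\bar\nu=0$) and the stability assumption \hyperlink{A3}{{(A3)}} (tightness of $\{\mu^h\}$ and finiteness of $\int\Lyap\,\D\bar\mu$) are essential.
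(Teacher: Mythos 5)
Your proposal is correct. The upper bound is essentially the paper's own Step~2: a Lipschitz $\epsilon$-optimal policy from \cite{PYNearsmoothSCL24}, the convergence of invariant measures from Theorem~\ref{TApproxInvA}, and the uniform Lyapunov bound $\int_{\Rd}\Lyap\,\D\mu^h_{v^\epsilon}\le C_0/C_1$ for tightness (your remark that $\hat C_0$ in \cref{EconvgInv1B} is of order $1-e^{-C_1h}$ is exactly the point that makes this bound $h$-uniform; the paper obtains the same bound by iterating \cref{ELyap1A} and letting $n\to\infty$). The lower bound, however, is a genuinely different route. The paper views the optimal stationary pair $(X^h(\cdot),m^h(\cdot))$ as a stationary process on the Skorokhod path space, invokes the tightness and limit identification of Theorem~\ref{TConvChain}, argues that the weak limit is again stationary with invariant measure $\mu$ (via $\int\sL_{\bar m}f\,\D\mu=0$ and \cite[Theorem~2.6.16]{ABG-book}), and then bounds $\Exp_\mu^{\bar m}\bigl[\int_0^1c\bigr]$ below by $\sE^*$. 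You instead stay entirely at the level of ergodic occupation measures: you pass to the limit in the one-step invariance identity $\int_{\Rd}\bigl(\int_{\Rd}g\,\cT_h(\D y\,|\,x,v^{h*}(x))-g(x)\bigr)\mu^h(\D x)=0$ using a quantitative $\order(\sqrt h)$ generator-consistency estimate (which is correct: it needs only (A1), $g\in\cC_c^\infty$, and $\Exp|\tilde X_s-x|=\order(\sqrt s)$), conclude $\int\sL_\zeta g\,\D\bar\nu=0$, and appeal to the convex-analytic characterization of $\sE^*$. Your version buys a more elementary and more quantitative identification of the limit — it avoids path-space tightness, the Skorokhod topology, and the somewhat delicate claim that weak limits of stationary pairs are stationary — at the price of explicitly invoking the disintegration and infimum-over-occupation-measures results from \cite{ABG-book}; the paper's version is shorter given that Theorem~\ref{TConvChain} is already in place. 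Both arguments hinge on the same two ingredients: the $h$-uniform moment bound on $\Lyap$ for tightness, and the Lipschitz near-optimal selector for the converse inequality.
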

\begin{proof}

\textbf{Step-1:}
From Theorem~\ref{TErgoOptMDP1}, we have $v^{h*}\in \Udsm^h$  such that $\sE_h^*(x_0) = \sE_h(x_0, v^{h*})$. Let $m^h\in \cM(\infty)$ be the relaxed control representation of $v^{h*}$ (i.e., $m^h(H) = \int_{\Act\times [0, \infty)} I_{\{(v^{h*}(t), t)\in H\}} m_t^h(d\zeta)dt$ for all $H\in \fB(\Act\times [0, \infty))$)\,. Thus
\begin{align}\label{EContValue1B}
\sE_h^*(x_0) &= \sE_h(x_0, v^{h*})\nonumber\\
&= \int_{\Rd}\int_{\Act} c(x,u)v^{h*}(x)(du)\mu_{v^{h*}}^{h}(dx) \nonumber\\
&= \Exp_{\mu_{v^{h*}}^{h}}^{m^{h}}\left[\int_{0}^1 c(X^{h}(s), m^h(s))ds \right]
\end{align}
Since $X^h(0)$ follows $\mu_{v^{h*}}^{h}$, it follows that $(X^h(s), m^h(s))$ is a stationary pair\,. In view of Remark~\ref{R1} and (\ref{ELyap1A}), we have that 
\begin{equation*}
\Exp_{x}^{m^h}\left[\Lyap(X_n^h)\right] \leq \frac{C_{0}}{C_1} + \Lyap(x) e^{-nh C_1}
\end{equation*} for all $n\geq 0$, $x\in \Rd$\,. Now letting $n\to \infty$, we get
\begin{equation*}
\int_{\Rd}\Lyap(y)\mu_{v^{h*}}^{h}(d y) \leq \frac{C_{0}}{C_1}\,.
\end{equation*} Since $\Lyap$ is a inf-compact function, it is easy to see that $\{\mu_{v^{h*}}^{h}\}$ is a tight sequence of invariant measures, and as a consequence of that along a sub-sequence we have $\mu_{v^{h*}}^{h} \to \mu$ (where $\mu\in \cP(\Rd)$) weakly as $h\to 0$\,. 

Let $(\bar{X}(s), \bar{m}(s))$ be a weak limit of $(X^h(s), m^h(s))$ (along a sub-sequence). It exists because of tightness of $X^h(s)$ (follows from Theorem~\ref{TConvChain}) and $M(\infty)$ is compact\,. Since $X^h(0)$ follows $\mu_{v^{h*}}^{h}$, for any $f \in \cC_{b}^{2}(\Rd)$ and $t \geq 0$ we have 
\begin{equation*}
\Exp_{\mu_{v^{h*}}^{h}}^{m^h}\left[f(X^h(t))\right] = \int_{\Rd}f(x) \mu_{v^{h*}}^{h}(dx)\,.
\end{equation*} 

Now, since the weak limit of a stationary stochastic process is also a stationary stochastic process and the initial distribution $\mu_{v^{h*}}^{h} \to \mu$ (weakly), letting $h\to 0$ from the above equation it follows that
\begin{equation*}
\Exp_{\mu}^{\bar{m}}\left[f(\bar{X}(t))\right] = \int_{\Rd}f(x) \mu(dx)\quad \text{for all}\,\,\, t\geq 0\,.
\end{equation*}This implies that $$\lim_{t\to 0} \frac{1}{t}\left(\Exp_{\mu}^{\bar{m}}\left[f(\bar{X}(t))\right] - \Exp_{\mu}^{\bar{m}}\left[f(\bar{X}(0))\right] \right) = 0$$ that is 
\begin{equation*}
\int_{\Rd}\sL_{\bar{m}}f(x)\mu(dx) = 0\,.
\end{equation*} Hence, $\mu$ is an invariant measure of the pair $(\bar{X}(s), \bar{m}(s))$ (see, \cite[Theorem 2.6.16]{ABG-book}).

Thus taking limit $h\to 0$ (along a suitable sub-sequence), from (\ref{EContValue1B}) we obtain  
\begin{align}\label{EContValue1c}
\lim_{h\to 0} \sE_h^*(x_0) &=\,\lim_{h\to 0} \Exp_{\mu_{v^{h*}}^{h}}^{m^{h}}\left[\int_{0}^1 c(X^{h}(s), m^h(s))ds \right]\nonumber\\
&=\, \Exp_{\mu}^{\bar{m}}\left[\int_{0}^1 c(\bar{X}(s), \bar{m}(s))ds \right]\nonumber\\
&=\, \limsup_{T\to\infty}\frac{1}{T}\Exp_{\mu}^{\bar{m}}\left[\int_0^T c\left(X(s),\bar{m}(s)\right)ds\right]\nonumber\\
&\geq \sE^*(x_0)\,.
\end{align}

\textbf{Step-2:}
Let $v^{\epsilon}\in \Usm$ be a Lipschitz continuous $\epsilon$-optimal policy of (\ref{ECost1}) (existence of such near optimal policy follows from the discussion as in \cite[Section 4]{PYNearsmoothSCL24})\,. Then, using Theorem~\ref{TApproxInvA}, we have the following
\begin{align}\label{EContValue1D}
& \lim_{h\to 0}\sE_h^*(x_0) \leq \lim_{h\to 0}\sE_h(x_0, v^{\epsilon}) \nonumber\\
&= \lim_{h\to 0}\Exp_{\mu_{v^{\epsilon}}^{h}}^{v^{\epsilon}}\left[\int_{0}^1 c(X^{h}(s), v^{\epsilon}(X^{h}(s)))ds\right]\nonumber\\ 
&= \Exp_{\mu_{v^{\epsilon}}}^{v^{\epsilon}}\left[\int_{0}^1 c(X(s), v^{\epsilon}(X(s)))ds\right]\nonumber\\
& = \limsup_{T\to\infty}\frac{1}{T}\Exp_{\mu_{v^{\epsilon}}}^{v^{\epsilon}}\left[\int_0^T c\left(X(s),v^{\epsilon}(X(s))\right)ds\right]\nonumber\\
& \leq \sE^*(x_0) + \epsilon\,.
\end{align}
Since $\epsilon > 0$ is arbitrary, from (\ref{EContValue1c}) and (\ref{EContValue1D}), we obtain our result\,.
\end{proof}
The following theorem shows that any optimal control of the discrete-time model (\ref{MDP1_cost}) is near optimal for (\ref{ECost1}) (for $h$ small enough)\,.
\begin{theorem}\label{TNearOpt1A}
Suppose that Assumptions \hyperlink{A1}{{(A1)}}--\hyperlink{A3}{{(A3)}} hold. Then we have 
\begin{align}\label{ENearOpt1A}
\lim_{h\to 0} \sE(x_0, v^{h*}) = \sE^*(x_0)\,.
\end{align}    
\end{theorem}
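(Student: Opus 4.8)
The plan is to derive the statement from \Cref{TContValue1A} by a quantitative comparison, \emph{carried out under one and the same policy}, between the continuous-time ergodic cost incurred by $v^{h*}$ and its discrete-time ergodic cost. Recall that, by \Cref{TErgoOptMDP1}, the policy $v^{h*}\in\Udsm^h$ is optimal for the discrete-time model, so $\sE_h(x_0,v^{h*})=\sE_h^*(x_0)$, and the latter converges to $\sE^*(x_0)$ by \Cref{TContValue1A}. Here $\sE(x_0,v^{h*})$ is read as $\sE_{x_0}(c,v^{h*}(\cdot))$, with $v^{h*}(\cdot)\in\bar{\Uadm}^h$ the piecewise-constant (``sample-and-hold'') interpolation of \cref{pccontrol}. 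Hence, by the triangle inequality, it suffices to prove that $\bigl|\sE(x_0,v^{h*})-\sE_h(x_0,v^{h*})\bigr|\to0$ as $h\to0$, and the conclusion follows.

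For this comparison I would work on the interpolated diffusion $\tilde X^h$ of \Cref{R1}, which under $v^{h*}(\cdot)$ satisfies $\tilde X^h_{kh}\sim X^h_k$. Conditioning at the sampling instants and using the tower property, for $T=Nh$,
\begin{equation*}
\Exp_{x_0}^{v^{h*}}\Bigl[\int_0^{Nh} c\bigl(\tilde X^h_s,v^{h*}(\tilde X^h_{n_s h})\bigr)\D s\Bigr]=\sum_{k=0}^{N-1}\Exp_{x_0}^{v^{h*}}\bigl[g_h(X^h_k)\bigr],\qquad g_h(x):=\Exp\Bigl[\int_0^{h} c\bigl(\tilde X^h_s,v^{h*}(x)\bigr)\D s\,\Big|\,\tilde X^h_0=x\Bigr],
\end{equation*}
whereas the discrete-time cost accumulates $\sum_{k=0}^{N-1}c_h(X^h_k,v^{h*}(X^h_k))=\sum_{k=0}^{N-1}h\,c(X^h_k,v^{h*}(X^h_k))$, so the whole matter reduces to estimating $g_h(x)-h\,c(x,v^{h*}(x))$. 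The within-step mean-square bound $\Exp\bigl[\sup_{0\le s\le h}|\tilde X^h_s-x|^2\mid\tilde X^h_0=x\bigr]\le\hat C h$, valid for $h\le1$ and uniform in $x$ since $b$ and $\upsigma$ are bounded (Doob's inequality, as in the derivation of \cref{EtightA}), together with the local Lipschitz continuity of $c$ in its first argument uniformly in $\zeta$, yields, after splitting according to whether $x\in\sB_R$, an estimate of the shape
\begin{equation*}
\bigl|g_h(x)-h\,c(x,v^{h*}(x))\bigr|\le h\,\eta_R(h)\,\Ind_{\sB_R}(x)+2h\,\norm{c}_\infty\Ind_{\sB_R^c}(x),\qquad \eta_R(h):=L_{R+1}\sqrt{\hat C h}+2\norm{c}_\infty\hat C h,
\end{equation*}
where $L_{R+1}$ is the local Lipschitz constant of $c$ over $\sB_{R+1}$ (uniform in $\zeta\in\Act$), so $\eta_R(h)\to0$ as $h\to0$ for each fixed $R$. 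No regularity of $v^{h*}$ enters: only the trajectory's within-step displacement is used.

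To control the tail term I would invoke the Foster--Lyapunov estimate. By \Cref{R1} and \cref{ELyap1A}, $\Exp_{x_0}^{v^{h*}}[\Lyap(X^h_k)]\le \tfrac{C_0}{C_1}+\Lyap(x_0)\E^{-khC_1}$ uniformly in $h$, hence $\Prob(X^h_k\notin\sB_R)\le\iota_R^{-1}\bigl(\tfrac{C_0}{C_1}+\Lyap(x_0)\E^{-khC_1}\bigr)$ with $\iota_R:=\inf_{\sB_R^c}\Lyap$, and $\iota_R\to\infty$ because $\Lyap$ is inf-compact. Averaging over $k\le N$, dividing by $Nh$, and passing to $\limsup_{N\to\infty}$ (the discrepancy between $T$ and the nearest multiple of $h$ contributes only $\order(h\norm{c}_\infty/T)$), one obtains
\begin{equation*}
\bigl|\sE(x_0,v^{h*})-\sE_h(x_0,v^{h*})\bigr|\le\eta_R(h)+\frac{2\norm{c}_\infty C_0}{C_1\,\iota_R}\qquad\text{for every }R>0\text{ and }h\le1,
\end{equation*}
the matching lower bound being obtained identically (here $c\ge0$ keeps the $\limsup$ bookkeeping painless). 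Given $\varepsilon>0$, first fix $R$ so that the second term is $<\varepsilon/2$, then $h_0$ so that $\eta_R(h)<\varepsilon/2$ for $h<h_0$; this gives the claimed limit, and combining it with $\sE_h(x_0,v^{h*})=\sE_h^*(x_0)\to\sE^*(x_0)$ finishes the proof.

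The principal obstacle is that $v^{h*}$ is only a measurable selector of \cref{EErgoOptMDP1B}, in general neither Lipschitz nor continuous, so \Cref{TApproxInvA} is unavailable and a direct comparison of the invariant measures $\mu_{v^{h*}}$ and $\mu_{v^{h*}}^h$ is out of reach. The device that circumvents this is to compare the two ergodic costs under the \emph{same} policy and chain, reducing the error to the within-step fluctuation of the state, which is $\order(\sqrt h)$ uniformly over all policies; the noncompactness of $\Rd$ is then absorbed, uniformly in $h$, by the common Lyapunov function $\Lyap$. A minor technical caveat is that $\sE$ and $\sE_h$ are defined through $\limsup$, so the two one-sided inequalities must be assembled via $\limsup(a_N+b_N)\le\limsup a_N+\limsup b_N$ and $\limsup(a_N+b_N)\ge\limsup a_N+\liminf b_N$, not naive additivity.
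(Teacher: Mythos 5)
Your proposal is correct, and it shares the paper's outer skeleton — the triangle inequality through $\sE_h(x_0,v^{h*})=\sE_h^*(x_0)\to\sE^*(x_0)$ (Theorem~\ref{TContValue1A}), reducing everything to $\abs{\sE(x_0,v^{h*})-\sE_h(x_0,v^{h*})}\to0$ under the sample-and-hold implementation of Remark~\ref{R1} — but it handles that remaining term by a genuinely different mechanism. The paper disposes of it by reusing the weak-convergence machinery of Theorem~\ref{TContValue1A}: the stationary pairs $(X^h(\cdot),m^h(\cdot))$ converge weakly, so both ergodic costs are asserted to converge to the common limit $\sE(x_0,\bar m)$ of \cref{EContValue1c} (this implicitly requires identifying the cost started from $x_0$ with the stationary expectation, a step the paper leaves to the reader). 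You instead give a direct, quantitative, pathwise-in-expectation comparison of the two finite-horizon time averages under the \emph{same} policy: the tower property at sampling instants reduces the error to the within-step displacement $\Exp[\sup_{s\le h}\abs{\tilde X^h_s-x}^2]=\order(h)$, the local Lipschitz continuity of $c$ controls the error on $\sB_R$, and the uniform-in-$h$ Foster--Lyapunov bound \cref{ELyap1A} absorbs the tail $\sB_R^c$ where only boundedness of $c$ is available; your $\limsup$ bookkeeping via $\abs{\limsup a_N-\limsup b_N}\le\limsup\abs{a_N-b_N}$ is also handled correctly. What your route buys is an explicit $\order(\sqrt h)$-plus-tail rate, no appeal to weak convergence or to invariant measures of the continuous-time process under the merely measurable selector $v^{h*}$, and a cleaner treatment of the initial condition $x_0$ versus stationarity; what the paper's route buys is brevity, at the cost of leaning on the unproven "easy to see" identification of the two limits. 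Both are valid; yours is the more self-contained argument for this step.
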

\begin{proof}
As in the proof of Theorem~\ref{TContValue1A}, let $m^h$ be the relaxed control representation of $v^{h*}$\,. Now, by triangle inequality we get
\begin{align*}
|\sE(x_0, m^{h}) - \sE^*(x_0)| \leq &|\sE(x_0, m^{h}) - \sE_h(x_0, m^{h})| \nonumber\\
& + |\sE_h(x_0, m^{h}) - \sE^*(x_0)|\,.  
\end{align*} From Theorem~\ref{TContValue1A}, it is easy to see that the second term of the above inequality converges to zero as $h\to 0$\,. In view of Remark~\ref{R1}, it is easy to see that $\sE(x_0, m^{h}), \sE_h(x_0, m^{h})$ converge to the same limit $\sE(x_0, \bar{m})$ (see (\ref{EContValue1c})) as $h\to 0$\,. This completes the proof\,.
\end{proof}
\section{Appendix}
\textbf{Another Proof of Theorem~\ref{TConvChain}:}
Since the space $\cM(\infty)$ is compact, viewing $v^{h}(\cdot)$ as an element of $\cM(\infty)$, it is easy to see that along a sub-sequence $v^{h}(\cdot)$ converges weakly to some $U\in\Uadm$. Thus, by Skorohod's representation theorem, over a common probability space we have $v^n(\cdot , \omega) \to U(\omega)$ weakly as a $\cP([0,t]\times\Act)$-valued probability measure sequence, almost surely (in $\omega$), that is
\begin{align}\label{ETConvChain1A}
& \lim_{h\to\infty}\int_{0}^{t}\int_{\Act}\hat{f}(s, \zeta)v^{h}(s,\omega)(\D \zeta)\D s \nonumber\\
&= \int_{0}^{t}\int_{\Act}\hat{f}(s, \zeta)U_s(\omega)(\D \zeta)\D s
\end{align} almost surely in $\omega$, for any $\hat{f}\in \cC_{b}([0,t]\times \Act)$ and $t \geq 0$\,.

Let $\tilde{X}^h_t$ be the solution of (\ref{E1.1}) under the control policy $v^h(\cdot)$ with initial condition $x^h_0$\,. Then for any finite stopping time $\tau$ (with respect to the filtration generated by $X^{h}(\cdot)$) and any positive constant $\delta$, we have 
\begin{align*}
&\Exp\big[|X^{h}(\tau + \delta) - X^{h}(\tau)|^2\big] \nonumber\\
&\leq  \Exp\big[|\int_{\tau}^{\tau +\delta} \left(b(\tilde{X}^h_s, v^h(\tilde{X}^h_s)) \D s + \upsigma(\tilde{X}^h_s)dW_s\right)|^2\big]\nonumber\\
&\leq 2(\|b\|_{\infty}^2 \delta +  \|\upsigma\|_{\infty}^2)\delta\,.
\end{align*} Therefore the continuous-time interpolated process $X^{h}(\cdot)$ is tight\,.

Let $f\in \cC_{BL}(\Rd)$. Thus, for any $T\geq 0$, we have
\begin{align}\label{EConvChain1} 
&\sup_{t\in [0, T]}\Exp\left[ |f(X_t) - f(X^h(t))|\right] \nonumber\\
&\leq \|f\|_{BL}\sup_{t\in [0, T]}\Exp\left[ |X_t -X^h(t)|\right] \nonumber\\
&\leq \|f\|_{BL}\sup_{t\in [0, T]}\Exp\left[ |X_t -X^{h}(t)|^2\right]^{\frac{1}{2}}
\end{align}

Let $n_t\df \max\{n: nh \leq t\}$\,. Hence, it is easy to see that $X^h(t) = X^h_{n_{t}}$. This gives us
\begin{align*}
& X_t - X^h(t) \nonumber\\
= & x - x^h_0 + \int_{0}^{n_{t}h}\left( b(X_s, U_s) - b(\tilde{X}^h_s, v^h(s)) \right) ds \nonumber\\
&+ \int_{0}^{n_{t}h} \left( \upsigma(X_s) - \upsigma(\tilde{X}^h_s)\right)dW_s \nonumber\\
&+ \int_{n_{t}h}^t\left(b(X_s, U_s) \D s + \sigma(X_s)dW_s\right)\,.    
\end{align*}
In view of this, we get the following
\begin{align}\label{EConvChain2}
&Z_T^h \df \Exp\left[ \sup_{t\in [0,T]}|X_t -X^{h}(t)|^2\right] \nonumber\\
\leq &  4\bigg(|x - x^h_0|^2 \nonumber\\
+& \Exp\left[\sup_{t\in [0,T]}\bigg(\int_{0}^{n_{t}h}\left( b(X_s, U_s) - b(\tilde{X}^h_s, v^h(s)) \right) ds \bigg)^2\right]  \nonumber\\
+& \Exp\left[\sup_{t\in [0,T]}\bigg(\int_{0}^{n_{t}h} \left( \sigma(X(s)) - \sigma(\tilde{X}^h_s)\right)dW_s\bigg)^2\right]\nonumber\\
+& \Exp\left[\sup_{t\in [0,T]}\bigg(\int_{n_{t}h}^t b(X_s, U_s) \D s + \upsigma(X_s)dW_s\bigg)^2 \right]\bigg)\nonumber\\
\leq &  4\bigg(|x - x^h_0|^2 \nonumber\\
+& 2\Exp\left[\sup_{t\in [0,T]}\bigg(\int_{0}^{n_{t}h}\left( b(X_s, U_s) - b(X_s, v^h(s)) \right) ds \bigg)^2\right] \nonumber\\
+& 2T\int_{0}^{T} \Exp\left( |b(X_s, v^h(s)) - b(\tilde{X}^h_s, v^h(s))|^2 \right) ds \nonumber\\
+ & 4\int_{0}^{n_{T}h} \Exp\left( \|\sigma(X_s) - \sigma(\tilde{X}^h_t)\|^2\right)ds \nonumber\\
+& 2\sup_{t\in [0, T]}\big(\|\int_{n_{t}h}^t b(X_s, U_s)\D s\|_2^2  + 2\|\int_{n_{t}h}^t\upsigma(X_s)dW_s\|_2^2\big)\bigg)  \nonumber\\
\leq &  4\bigg(|x - x^h_0|^2 + 2(\|b\|_{\infty}^2 h +  \|\upsigma\|_{\infty}^2)h \nonumber\\
+& 2\Exp\left[\sup_{t\in [0,T]}\bigg(\int_{0}^{n_{t}h}\left( b(X_s, U_s) - b(X_s, v^h(s)) \right) ds \bigg)^2\right] \nonumber\\
+ & 2\hat{C}_2 (T + 2)\int_{0}^{T} \Exp\left( |X_s - \tilde{X}^h_s|^2 \right) ds\bigg)\nonumber\\
\leq & 4\bigg(|x - x^h_0|^2 + 2(\|b\|_{\infty}^2 h +  \|\upsigma\|_{\infty}^2)h \nonumber\\
+&  4\Exp\left[\sup_{t\in [0,T]}\bigg(\int_{0}^{t}\left( b(X_s, U_s) - b(X_s, v^h(s)) \right) ds \bigg)^2\right] \nonumber\\
+ & 4\Exp\left[\sup_{t\in [0,T]}\bigg(\int_{n_{t}h}^{t}\left( b(X_s, U_s) - b(X_s, v^h(s)) \right) ds\bigg)^2\right] \nonumber\\
+& 2\hat{C}_2 (T + 2)\int_{0}^{T} \Exp\left( |X_s - X^h_{n_s}|^2 \right) ds \nonumber\\
&+ 2\hat{C}_2 (T + 2)\int_{0}^{T} \Exp\left( |X^h_{n_s} - \tilde{X}^h_s|^2 \right) ds\bigg)\nonumber\\
\leq & 4\bigg(|x - x^h_0|^2 + 2(5\|b\|_{\infty}^2 h +  \|\upsigma\|_{\infty}^2)h \nonumber\\
+&  4\Exp\left[\sup_{t\in [0,T]}\bigg(\int_{0}^{t}\left( b(X_s, U_s) - b(X_s, v^h(s)) \right) ds \bigg)^2\right] \nonumber\\
& + 2\hat{C}_2 (T + 2)\int_{0}^{T} \Exp\left( |X_s - X^h(s)|^2 \right) ds \nonumber\\
&+ 2\hat{C}_2 (T + 2)\int_{0}^{T} \Exp\left( |X^h_{n_s} - \tilde{X}^h_s|^2 \right) ds\bigg)\,,
\end{align} for some constant $\hat{C}_2$ which depends on the $C_0$ (see Assumption~\hyperlink{A1}{{(A1)}}))\,. 
Since $b, \upsigma$ are bounded, we have
\begin{align}\label{EConvChain3}
&\Exp\left( |X^h_{n_s} - \tilde{X}^h_s|^2 \right)\nonumber\\
\leq & 2\Exp\left(|\int_{n_sh}^{s}b(\tilde{X}_r, v^h(r))dr|^2\right) + 2\Exp\left(|\int_{n_s}^{s}\sigma(\tilde{X}_r)dW_r|^2\right)\nonumber\\
\leq & 2(\|b\|_{\infty}^2 h +  \|\upsigma\|_{\infty}^2)h\,.
\end{align}
Thus, from (\ref{EConvChain2}) and (\ref{EConvChain3}), we deduce that
\begin{align}\label{EConvChain3A}
Z_T^h \leq & 4\bigg(|x - x^h_0|^2 + 2(5\|b\|_{\infty}^2 h +  \|\upsigma\|_{\infty}^2)h \nonumber\\
& + 4\hat{C}_2 (T + 2)T(\|b\|_{\infty}^2 h +  \|\upsigma\|_{\infty}^2)h \nonumber\\ & + 4\Exp\left[\sup_{t\in [0,T]}\bigg(\int_{0}^{t}\left( b(X_s, U_s) - b(X_s, v^h(s)) \right) ds \bigg)^2\right] \nonumber\\
& + 4\hat{C}_2 (T + 1)\int_{0}^{T} Z_s^h ds\bigg)\nonumber\\
\leq & M^h(t) + 4\hat{C}_2 (T + 2)\int_{0}^{T} Z_s^hds\,,
\end{align}
where 
\begin{align*}
& M^h(t) = 4\bigg(|x - x^h_0|^2 + 2(3\|b\|_{\infty}^2 h +  \|\upsigma\|_{\infty}^2)h \nonumber\\
& + 2\hat{C}_2 (t + 1)t(\|b\|_{\infty}^2 h +  \|\upsigma\|_{\infty}^2)h \nonumber\\
& +  4\Exp\left[\sup_{t\in [0,T]}\bigg(\int_{0}^{t}\left( b(X_s, U_s) - b(X_s, v^h(s)) \right) ds \bigg)^2\right]\bigg)\,. 
\end{align*}
Since $b\in \cC_{b}(\Rd\times \Act)$, from (\ref{ETConvChain1A}), by dominated convergence theorem we have $$\Exp\left[\sup_{t\in [0,T]}\bigg(\int_{0}^{t}\left( b(X_s, U_s) - b(X_s, v^h(s)) \right) ds \bigg)^2\right] =0\,.$$ Thus, from (\ref{EConvChain3A}) we obtain
\begin{align*}
Z_T \leq M^h(T) + 8\hat{C}_2 (T + 1)\int_{0}^{T} Z_s ds\,,
\end{align*} Therefore, by the Gronwall's lemma, we deduce that 
\begin{align}\label{EConvChain5}
Z_T \leq M^h(T) + 2\hat{C}_2 (T + 1) e^{2\hat{C}_2 (T + 1)T}\int_{0}^{T}M^h(s) ds\,.
\end{align} For each $t\geq 0$, it is easy to see that $M^h(t)$ is bounded and 
$$\lim_{h\to 0} M^h(t) =0\,.$$ Thus, by dominated convergence theorem, from (\ref{EConvChain1} ) and (\ref{EConvChain5}), it follows that
\begin{align*}
\lim_{h\to 0}\sup_{t\in [0, T]}\Exp\left[ |f(X_t) - f(X^h(t))|\right] = 0\,,
\end{align*} for any $f\in \cC_{BL}(\Rd)$\,. This completes the proof\,.
\section*{Acknowledgment}
This research of S.P. is partly supported by
a Start-up Grant IISERB/R\&D/2024-25/154. S.Y. was partially supported by the Natural Sciences and Engineering Research Council of Canada (NSERC).
%%%%%%%%%%%%%%%%%%%%%%%%%%%%%%%%%%%%%%%%%%
\bibliography{Somnath_Robustness,SerdarBibliography,Quantization}

@book {KH77,
    AUTHOR = {H. J. Kushner},
     TITLE = {Probability Methods for Approximations in Stochastic Control and for Elliptic Equations},
    SERIES = {Math. Sci. Eng.},
 PUBLISHER = {Academic Press, New York},
    VOLUME = {129},
      YEAR = {1977}}

@book {KH01,
    AUTHOR = {H. J. Kushner},
     TITLE = {Heavy Traffic Analysis of Controlled Queueing and Communication Networks},
    SERIES = {Stoch. Model. Appl. Probab.},
 PUBLISHER = {Springer-Verlag, New York},
    VOLUME = {47},
      YEAR = {2001} }

@article {KN98A,
    AUTHOR = {Krylov, N.},
     TITLE = {On the rate of convergence of finite-difference approximations for Bellman’s equations},
   JOURNAL = {St. Petersburg Math. J.},
    VOLUME = {9},
      YEAR = {1998},
     PAGES = {639--650}}

@article {KN2000A,
    AUTHOR = {Krylov, N.},
     TITLE = {On the rate of convergence of finite-difference approximations for Bellmans equations with variable coefficients},
   JOURNAL = {Probab Theory Relat Fields},
    VOLUME = {117},
      YEAR = {2000},
     PAGES = {1--16},
       DOI = {10.1007/s004400050264},
       URL = {https://doi.org/10.1007/s004400050264}}

@article {KN2001AA,
    AUTHOR = {Krylov, N.},
     TITLE = {Mean value theorems for stochastic integrals},
   JOURNAL = {Ann. Probab.},
    VOLUME = {29},
      YEAR = {2001},
     PAGES = {385--410},
       DOI = {10.1214/aop/1008956335},
       URL = {https://doi.org/10.1214/aop/1008956335}}

@article {KN99AA,
    AUTHOR = {Krylov, N.},
     TITLE = {Approximating Value Functions for Controlled Degenerate Diffusion Processes by Using Piece-Wise Constant Policies},
   JOURNAL = {Electron. J. Probab.},
    VOLUME = {4},
      YEAR = {1999},
     PAGES = {1--19},
       DOI = {10.1214/EJP.v4-39},
       URL = {https://doi.org/10.1214/EJP.v4-39}}

@article{KBHIFAC23,
author = {Sambhu H. Karumanchi and Mohamed A. Belabbas and Naira Hovakimyan},
title = {Empirical Dynamic Programming for Controlled Diffusion Processes},
journal = {IFAC-PapersOnLine},
volume = {56},
number = {2},
pages = {11235-11241},
year = {2023},
note = {22nd IFAC World Congress},
issn = {2405-8963}
}

@article{jakobsen2019improved,
  title={Improved order 1/4 convergence for piecewise constant policy approximation of stochastic control problems},
  author={E. R. Jakobsen and A. Picarelli and C. Reisinger},
  year={2019}
}

@article{pradhanyuksel2023DTApprx,
  title={Controlled Diffusions under Full, Partial and Decentralized Information: Existence of Optimal Policies and Discrete-Time Approximations},
  author={Pradhan, Somnath and Y{\"u}ksel, Serdar},
  journal={arXiv preprint arXiv:2311.03254},
  year={2023}
}

@article{bayraktar2022approximate,
  title={Approximate Q Learning for Controlled Diffusion Processes and Its Near Optimality},
  author={E. Bayraktar and A. D. Kara},
  journal={SIAM Journal on Mathematics of Data Science},
  volume={5},
  number={3},
  pages={615--638},
  year={2023},
  publisher={SIAM}
}

@article{KSYContQLearning,
  title={Q-Learning for {M}{D}{P}s with General Spaces: Convergence and Near Optimality via Quantization under Weak Continuity},
  author={A.D Kara and N. Saldi and S. Y\"uksel},
   year={2023},
   pages={1-34},
   journal={Journal of Machine Learning Research}
}

@article{pradhan2022near,
  title={Continuity of cost in {B}orkar control topology and implications on discrete space and time approximations for controlled diffusions under several criteria},
  author={S. Pradhan and S. Y{\"u}ksel},
  journal={Electronic Journal of Probability},
  volume={29},
  pages={1--32},
  year={2024},
  publisher={The Institute of Mathematical Statistics and the Bernoulli Society}
}

@article{kushner1990numerical,
  title={Numerical methods for stochastic control problems in continuous time},
  author={H. J. Kushner},
  journal={SIAM Journal on Control and Optimization},
  volume={28},
  number={5},
  pages={999--1048},
  year={1990},
  publisher={SIAM}
}

@article{kushner2014partial,
  title={A partial history of the early development of continuous-time nonlinear stochastic systems theory},
  author={H. J. Kushner},
  journal={Automatica},
  volume={50},
  number={2},
  pages={303--334},
  year={2014},
  publisher={Elsevier}
}

@book{kushner2012weak,
  title={Weak convergence methods and singularly perturbed stochastic control and filtering problems},
  author={H.J. Kushner},
  year={2012},
  publisher={Springer Science \& Business Media}
}

@book{fleming2006controlled,
  title={Controlled Markov processes and viscosity solutions},
  author={W. H. Fleming and H. M. Soner},
  volume={25},
  year={2006},
  publisher={Springer Science \& Business Media}
}

@article{serfozo1982convergence,
  title={Convergence of {L}ebesgue integrals with varying measures},
  author={R. Serfozo},
  journal={Sankhy{\=a}: The Indian Journal of Statistics, Series A},
  pages={380--402},
  year={1982},
  publisher={JSTOR}
}

@article{Meyn,
author = {S. P. Meyn and R. Tweedie},
title = {Stability of {M}arkovian processes {I}: Criteria for discrete-time chains},
journal = {Advances in Applied Probability},
volume = {24},
year = {1992},
pages = {542-574},
}

@article{Quantization,
author = {R. M. Gray and D. L. Neuhoff},
title = {Quantization},
journal = {IEEE Transactions on Information Theory},
volume = {44},
year = {1998},
month = {October},
pages = {2325-2384},
}

@article{meyn1993stability,
  title={Stability of {M}arkovian processes III: Foster-Lyapunov criteria for continuous-time processes},
  author={S. P. Meyn and R. L. Tweedie},
  journal={Advances in Applied Probability},
  pages={518--548},
  year={1993},
  publisher={JSTOR}
}

@book{kushner2001numerical,
  title={Numerical Methods for Stochastic Control Problems in Continuous Time},
  author={H. J. Kushner and P. G. Dupuis},
  volume={24},
  year={2001},
  publisher={Springer Science \& Business Media}
}

@book{ABG-book,
    AUTHOR = {Arapostathis, A. and Borkar, V. S. and
              Ghosh, M. K.},
     TITLE = {Ergodic control of diffusion processes},
    SERIES = {Encyclopedia of Mathematics and its Applications},
    VOLUME = {143},
 PUBLISHER = {Cambridge University Press},
   ADDRESS = {Cambridge},
      YEAR = {2012},
      MRNUMBER = {2884272}}

@book {Bor-book,
    AUTHOR = {Borkar, Vivek S.},
     TITLE = {Optimal control of diffusion processes},
    SERIES = {Pitman Research Notes in Mathematics Series},
    VOLUME = {203},
 PUBLISHER = {Longman Scientific \& Technical, Harlow; copublished in the
              United States with John Wiley \& Sons, Inc., New York},
      YEAR = {1989},
     PAGES = {vi+196},
      ISBN = {0-582-03540-6},
   MRCLASS = {93E20 (49A60 49C10 60H10 60J60 93-02)},
  MRNUMBER = {1005532},
MRREVIEWER = {Anatoli\u{\i} B. Juditsky},
}

@book {KD92,
    AUTHOR = {H. J. Kushner and P. G. Dupuis},
     TITLE = {Numerical Methods for Stochastic Control Problems in Continuous Time},
 PUBLISHER = {Springer-Verlag, New York,},
      YEAR = {1992}}

@book {Adams,
    AUTHOR = {Adams, R. A.},
     TITLE = {Sobolev Spaces},
 PUBLISHER = {Academic Press, New York},
      YEAR = {1975},}

@book{PBill-book,
    AUTHOR = {Patrick Billingsley},
     TITLE = {Convergence of Probability Measures},
    SERIES = {2nd ed.},
 PUBLISHER = {Wiley},
   ADDRESS = {New York},
      YEAR = {1999},
}

@book{AA12,
    AUTHOR = {A. Arapostathis},
     TITLE = {On the policy iteration algorithm for nondegenerate controlled diffusions under the ergodic criterion},
    SERIES = {In Optimization, control, and applications of stochastic systems},
 PUBLISHER = {Birkh\"auser},
   ADDRESS = {Boston},
      YEAR = {2012},}

@article{AA13,
   author ={A. Arapostathis},
   TITLE ={On the Non-Uniqueness of Solutions to the Average Cost HJB for Controlled Diffusions with Near-Monotone Costs},
  journal={arXiv preprint arXiv:1309.6307},
  year={2013}}

@article{BS86,
  title={A remark on the attainable distributions of controlled diffusions},
  author={Borkar, V. S.},
  journal={Stochastics},
  volume={18},
  pages={17--23},
  year={1986}}

@article{BB96,
  title={Occupation measures for controlled Markov processes: characterization and optimality},
  author={Bhatt, A. G. and Borkar, V. S.},
  journal={Annals of Probability},
  volume={24},
  pages={1531--1562},
  year={1996}}

@article{BG88I,
  title={Ergodic control of multidimensional diffusions. I. The existence results},
  author={Borkar, V. S. and Ghosh, M. K.},
  journal={SIAM J. Control Optim.},
  volume={26},
  pages={112--126},
  year={1988}}

@article{BG90b,
  title={Ergodic control of multidimensional diffusions II. Adaptive control},
  author={Borkar, V. S. and Ghosh, M. K.},
  journal={Appl.Math. Optim.},
  volume={21},
  pages={191--220},
  year={1990b}}

@article {ABEGM-93,
    AUTHOR = {Arapostathis, Ari and Borkar, Vivek S. and Fernández-Gaucherand, Emmanuel and Mrinal K. Ghosh and Marcus, Steven I.},
     TITLE = {Discrete-time controlled Markov processes with average cost criterion: a survey},
   JOURNAL = {SIAM J. Control Optim.},
  FJOURNAL = {SIAM Journal on Control and Optimization},
    VOLUME = {31},
      YEAR = {1993},
    NUMBER = {2},
     PAGES = {282--344},
       DOI = {10.1137/0331018}}

@article{BG90,
   AUTHOR = {Borkar, Vivek S. and Ghosh, Mrinal K.},
   TITLE  = {Controlled diffusions with constraints},
  JOURNAL = {Journal of Mathematical Analysis and Applications},
   VOLUME = {152},
   NUMBER = {1},
    PAGES = {88--108},
     YEAR = {1990},
      DOI = {10.1016/0022-247X(90)90094-V},
      URL = {https://doi.org/10.1016/0022-247X(90)90094-V},
}

@article{BR-02,
     author = {Barles, G. and Jakobsen, E. R.},
      title = {On the convergence rate of approximation schemes for {Hamilton-Jacobi-Bellman} equations},
    journal = {ESAIM: Mathematical Modelling and Numerical Analysis - Mod\'elisation Math\'ematique et Analyse Num\'erique},
      pages = {33--54},
  publisher = {EDP-Sciences},
     volume = {36},
     number = {1},
       year = {2002},
        doi = {10.1051/m2an:2002002},
   mrnumber = {1916291},
        url = {http://www.numdam.org/articles/10.1051/m2an:2002002/}
}

@article{BJ-06,
  author = {Barles, G. and Jakobsen, E. R.},
   title = {Error Bounds for Monotone Approximation Schemes for Hamilton-Jacobi-Bellman Equations},
 journal = {SIAM Journal on Numerical Analysis},
  volume = {43},
  number = {2},
  pages = {540--558},
  year = {2006},
  publisher = {Society for Industrial and Applied Mathematics}
   }

@article{JPR-19P,
  title={Improved order 1/4 convergence for piecewise constant policy approximation of stochastic control problems},
  author={Jakobsen, E. R. and Picarelli, A. and Reisinger, C.},
  journal={Electronic Communications in Probability},
  pages= {1-10},
  volume={24},
  year={2019}
}

@article{DMT95,
title = {{Exponential and Uniform Ergodicity of Markov Processes}},
author = {D. Down and S. P. Meyn and R. L. Tweedie},
volume = {23},
journal = {The Annals of Probability},
number = {4},
publisher = {Institute of Mathematical Statistics},
pages = {1671 -- 1691},
year = {1995},
doi = {10.1214/aop/1176987798},
URL = {https://doi.org/10.1214/aop/1176987798}
}

@article{PYNearsmoothSCL24,
title = {Near optimality of Lipschitz and smooth policies in controlled diffusions},
author = {Somnath Pradhan and Serdar Yüksel},
journal = {Systems \& Control Letters},
volume = {193},
pages = {105943},
year = {2024},
issn = {0167-6911},
doi = {https://doi.org/10.1016/j.sysconle.2024.105943},
url = {https://www.sciencedirect.com/science/article/pii/S0167691124002317}
}

@article{HL-97,
  AUTHOR = {Hern\'andez-Lerma, O. and Lasserre, J.B.},
   TITLE = {Policy Iteration for Average Cost Markov Control Processes on Borel Spaces},
 JOURNAL = {Acta Applicandae Mathematicae},
  VOLUME = {47},
    YEAR = {1997},
   PAGES = {125--154},
     DOI = {10.1023/A:1005781013253},
     URL = {https://doi.org/10.1023/A:1005781013253},
}

\end{document}